\numberwithin{equation}{section}
\newtheorem{theorem}{Theorem}[section]
\newtheorem{lemma}[theorem]{Lemma}
\newtheorem{proposition}[theorem]{Proposition}
\newtheorem{corollary}[theorem]{Corollary}
\newtheorem{example}[theorem]{Example}
\newtheorem{counterexample}[theorem]{Counterexample}
\newtheorem{remark}[theorem]{Remark}
\newtheorem{definition}[theorem]{Definition}
\newcommand{\RR}{\mathbb{R}}
\newcommand{\cO}{\mathcal{O}}
\newcommand{\cS}{\mathcal{S}}
\newcommand{\CalM}{\mathcal{M}}
\newcommand{\CC}{\mathbb{C}}
\newcommand{\cE}{\mathcal{E}}
\newcommand{\CalU}{\mathcal{U}}
\newcommand{\DD}{{\mathbb D}}
\def\bb1{{1\!\!1}}
\def\CalU{\mathcal{U}}
\def\cG{\mathcal{G}}
\def\bU{{\bar{U}}}
\def\tx{\tilde x}
\def\R{\Re e}
\def\I{\Im m}
\def\cQ{\mathcal{Q}}
\def\cT{\mathcal{T}}
\def\cB{\mathcal{B}}
\def\cA{\mathcal{A}}
\def\cC{\mathcal{C}}
\newcommand{\wprod}[1]{\langle{#1}\rangle}
\newcommand{\txi}{{\tilde \xi}}
\begin{document}

\title[Stability of multi-dimensional viscous shocks]
{Stability of multi-dimensional viscous shocks for symmetric systems
with variable multiplicities}

\author[Toan Nguyen]{Toan Nguyen}

\date{Revised date: \today}

\thanks{{\it 2000 Mathematics Subject Classification.} Primary 35L60;
Secondary 35B35, 35B40.\\
\indent I would like to thank Professor Kevin Zumbrun for suggesting
the problem and his many great advices, support, and helpful
discussions. I also thank the referees for their helpful comments
that greatly improved the exposition. This work was supported in
part by the National Science Foundation award number DMS-0300487.}

\address{Department of Mathematics, Indiana University, Bloomington, IN 47405}
\email{nguyentt@indiana.edu}

\maketitle

%

\begin{abstract} We establish long-time stability of multi-dimensional viscous shocks
of a general class of symmetric hyperbolic--parabolic systems with
variable multiplicities, notably including the equations of
compressible magnetohydrodynamics (MHD) in dimensions $d\ge 2$. This
extends the existing result established by K. Zumbrun for systems
with characteristics of constant multiplicity to the ones with
variable multiplicity, yielding the first such a stability result
for (fast) MHD shocks. At the same time, we are able to drop a
technical assumption on structure of the so--called glancing set
that was necessarily used in previous analyses. The key idea to the
improvements is to introduce a new simple argument for obtaining a
$L^1\to L^p$ resolvent bound in low--frequency regimes by employing
the recent construction of degenerate Kreiss' symmetrizers by O.
Gu\`es, G. M\'etivier, M. Williams, and K. Zumbrun. Thus, at the
low-frequency resolvent bound level, our analysis gives an
alternative to the earlier pointwise Green's function approach of K.
Zumbrun. High--frequency solution operator bounds have been
previously established entirely by nonlinear energy estimates.

\end{abstract}

\tableofcontents



\section{Introduction}
We consider a general system of viscous conservation laws 
($d\ge 2$)
\begin{equation}\label{sys}
\tilde U_t +  \sum_jF^j(\tilde U)_{x_j} = \sum_{jk}(B^{jk}(\tilde
U)\tilde U_{x_k})_{x_j}, \quad x\in \mathbb{R}^{d},\quad t>0,
\end{equation}
$\tilde U,F^j\in \mathbb{R}^n$, $B^{jk}\in\mathbb{R}^{n \times n}$,
$n\ge 2$, with initial data $\tilde U(x,0)=\tilde U_0(x)$, and a
planar viscous shock, connecting the endstates $U_\pm$:
\begin{equation}\label{profile}
\tilde U=\bU(x_1), \quad \lim_{x_1\to \pm\infty} \bU(x_1)=U_\pm.
\end{equation}

We study the long-time linearized and nonlinear stability of the
viscous shock $\bU$ under multi-dimensional perturbations of initial
data. The problem has been carefully and successfully investigated
by K. Zumbrun and his collaborators in \cite{Z2,Z3,Z4,GMWZ1}. There,
due to technical arguments of the analysis, the authors put
assumptions 
on the 
multiplicity of hyperbolic characteristic roots and structure of the
so-called glancing set (see (H4)-(H5) below). The latter assumption
(which is automatically satisfied in dimensions $d = 1, 2$ and in
any dimension for rotationally invariant problems) assures the
glancing set to be confined to a finite union of smooth curves on
which the branching eigenvalue has constant multiplicity. This is
precisely to reduce the complexity of multi-variable matrix
perturbation problem when dealing with glancing blocks to a
simplified form of a two-variable perturbation problem. Whereas, the
constant multiplicity assumption 
excludes an important physical application, namely, the equations of
magnetohydrodynamics (MHD) in dimensions $d\ge2$. In the current
paper, we are able to relax the assumption of constant
multiplicities to variable multiplicities, allowing (fast) MHD
shocks to be treated and thus yielding for the first time the
long-time multi-dimensional stability for these shocks. In addition,
we are also able to drop the assumption on structure of the glancing
set at a price of having $t^{1/4}$ slower in decay rates in 
dimensions $d\ge 3$.

Our main improvements rely on recent remarkable and technical works
of O. Gu\`es, G. M\'etivier, M. Williams, and K. Zumbrun
\cite{GMWZ5,GMWZ6} where the authors have obtained the $L^2$
stability estimates and small viscosity stability for the symmetric
systems with variable multiplicities via their construction of
Kreiss' symmetrizers. The idea is to employ these available
estimates to establish the long-time stability, or more precisely,
to derive a resolvent bound in low--frequency regimes. This will be
the main contribution of our present paper. High-frequency estimates
are already established by K. Zumbrun via elegant nonlinear energy
estimates for a very general class of symmetrizable systems,
including our class under consideration.

We would like to mention that the idea of using $L^2$ stability
estimates via the construction of degenerate Kreiss' symmetrizers to
attack the long-time stability problem has been investigated in
\cite{GMWZ1}. There the authors obtain the result under (H4)-(H5)
assumptions (and treat the strictly parabolic systems). In our
analysis, we avoid these technical assumptions, by introducing a
rather simpler argument 
for $L^1\to L^p$ resolvent bounds in low-frequency regimes, which turns out to be the key 
to the improvements. The analysis works precisely for the case of
dimensions $d\ge 3$. 
In dimension $d=2$ (the condition (H5) is now always satisfied), the
analysis of \cite{GMWZ1} indeed works even for the MHD shocks as we
are considering here by combining their later work in \cite{GMWZ6}
(though it was not stated there). In Section \ref{sec-H5}, we
represent a slightly modified version of \cite{GMWZ1} treating this
two--dimensional case, or more generally, cases with (H5) in a more
direct way. Once these low--frequency resolvent bounds are obtained,
the stability analysis follows in a standard fashion
\cite{Z2,Z3,Z4}. See Section \ref{discussions} for further
discussions.


%

\subsection{Equations and assumptions}
We consider the general hyperbolic-parabolic system of conservation
laws \eqref{sys} in conserved variable $\tilde U$, with
$$\tilde U = \begin{pmatrix}\tilde u^I\\
\tilde u^{II}\end{pmatrix}, \quad B^{jk}=\begin{pmatrix}0 & 0 \\
b^{jk}_1 & b^{jk}_2\end{pmatrix},$$ $\tilde u^I\in \RR^{n-r}$,
$\tilde u^{II}\in \RR^{r}$, and
\begin{equation}\label{ellcond}\Re\sigma \sum_{jk} b_2^{jk}\xi_j\xi_k
\ge \theta |\xi|^2>0, \quad \forall \xi \in \RR^n\backslash
\{0\}.\end{equation}

Following \cite{Z3,Z4}, we assume that equations
\eqref{sys} can be written,
alternatively, after a triangular change of coordinates
\begin{equation}\label{Wcoord}
\tilde W:=\tilde W(\tilde U) =\begin{pmatrix}\tilde w^I(\tilde u^I)\\
 \tilde w^{II}(\tilde u^I, \tilde u^{II})\end{pmatrix},
\end{equation}
in {\em the quasilinear, partially symmetric hyperbolic-parabolic
form}
\begin{equation}\label{symmetric}\tilde A^0 \tilde W_t + \sum_j\tilde A^j\tilde W_{x_j} = \sum_{jk}(\tilde
B^{jk} \tilde W_{x_k})_{x_j} + \tilde G,
\end{equation}
where $$\tilde A^0=\begin{pmatrix}\tilde A^0_{11} & 0\\ 0&\tilde
A^0_{22}\end{pmatrix},\qquad\tilde A^1=\begin{pmatrix}\tilde
A^1_{11} & \tilde A^1_{12} \\ \tilde A^1_{21}&\tilde
A^1_{22}\end{pmatrix}, \qquad \tilde B^{jk}=\begin{pmatrix}0 & 0 \\
0 & \tilde b^{jk}\end{pmatrix}$$ and, defining $\tilde W_\pm:=\tilde
W(U_\pm)$,
\medskip

(A1) $\tilde A^j(\tilde W_\pm),\tilde A^0,\tilde A^1_{11}$ are
symmetric, $\tilde A^0\ge \theta_0>0$,
\medskip

(A2) for each $\xi \in \RR^d\setminus \{0\}$, no eigenvector of
$\sum_j\xi_j\tilde A^j(\tilde A^0)^{-1}(\tilde W_\pm)$ lies in the
kernel of $\sum_{jk}\xi_j\xi_k\tilde B^{jk}(\tilde A^0)^{-1}(\tilde
W_\pm)$,
\medskip

(A3) $\Re\sigma\sum\tilde b^{jk}\xi_j\xi_k\ge \theta|\xi|^2$, and
$\tilde G=\begin{pmatrix}0\\\tilde g
\end{pmatrix}$ with $\tilde g(\tilde W_x,\tilde W_x)=\cO(|\tilde
W_x|^2).$
\medskip

Along with the above structural assumptions, we make the following
technical hypotheses:
\medskip

(H0) $F^j, B^{jk}, \tilde A^0, \tilde A^j, \tilde B^{jk}, \tilde
W(\cdot), \tilde g(\cdot,\cdot) \in C^{s+1}$, for $s\ge [(d-1)/2]+2$
in our analysis of linearized stability, and $s\ge
s(d):=[(d-1)/2]+4$ in our analysis of nonlinear stability.
\medskip

(H1) The eigenvalues of $\tilde A^1_{11}$ are (i) distinct from the shock speed $s=0$; (ii) of common sign; and (iii) of constant multiplicity with respect to $U$.
\medskip

(H2) 
$\det (dF^1(U_\pm)) \ne 0.$
\medskip

(H3) Local to $\bU(\cdot)$, stationary solutions of \eqref{sys},
connecting $U_\pm$, form a smooth manifold $\{\bU^\delta(\cdot)\},
\delta \in \CalU \subset \RR^l$.
 \medbreak

(H4) The eigenvalues of $\sum_j \xi_jdF^j(U_\pm)$ have constant
multiplicity with respect to $\xi\in \RR^d$, $\xi\ne 0$.

\medbreak

Structural assumptions (A1)-(A3) and (H0)-(H2) are satisfied for gas
dynamics and MHD; see discussions in
\cite{MaZ4,Z3,Z4,GMWZ5,GMWZ6}.\\

\textbf{Alternative Hypothesis H4$'.$} The constant multiplicity
condition in Hypothesis (H4) holds for the compressible Navier–
Stokes equations whenever  is hyperbolic. However, the condition
fails always for the equations of viscous MHD. In the paper, we are
able to treat symmetric systems like the viscous MHD under the
following relaxed hypothesis. \medbreak

(H4') The eigenvalues of $\sum_j \xi_jdF^j(U_\pm)$ are either
semisimple and of constant multiplicity or totally nonglancing in
the sense of \cite{GMWZ6}, Definition 4.3. \medbreak

\begin{remark}\textup{There will be easily seen that our results also apply
to the case where the characteristic roots satisfy a (BS)
condition\footnote{Thanks to one of the referees for his pointing
out this extension.} (see Definition 4.9, \cite{GMWZ6}), a more
general situation than the constant multiplicity condition, ensuring
that a suitable generalized block structure condition is
satisfied. 
 See Remark \ref{rem-BS} for further discussion.}
\end{remark}

\begin{remark} \textup{Here we stress that we are able to
drop the following structural assumption, which is needed for the
analyses of \cite{Z2,Z3,Z4,GMWZ1}. }

\textup{(H5) The set of branch points of the eigenvalues of $(\tilde
A^1)^{-1}(i\tau \tilde A^0+ \sum_{j\ne 1} i\xi_j\tilde A^j)_\pm$,
$\tau \in \RR$, $\tilde \xi\in \RR^{d-1}$ is the (possibly
intersecting) union of finitely many smooth curves
$\tau=\eta_q^\pm(\tilde \xi)$, on which the branching eigenvalue has
constant multiplicity $s_q$ (by definition $\ge 2$).}
\end{remark}

\subsection{Shock profiles}\label{shockprofile} We recall the following classification
of shock profiles.\\

{\parindent=0pt {\bf Hyperbolic Classification.} Let $i_+$ denote
the dimension of the stable subspace of $dF^1(U_+)$, $i_-$ denote
the dimension of the unstable subspace of $dF^1(U_-)$, and $i := i_+
+i_-$. Indices $i_\pm$ count the number of incoming characteristics
from the right/left of the shock, while $i$ counts the total number
of incoming characteristics toward the shock. Then, the hyperbolic
classification of profile $\bU(\cdot)$, i.e., the classification of
the associated hyperbolic shock $(U_-,U_+)$, is
$$\left\{\begin{array}{lll} \mbox{Lax type} &\mbox{if} & i=n+1,\\
\mbox{Undercompressive} &\mbox{if} & i\le n,\\
\mbox{Overcompressive} &\mbox{if} & i\ge n+2. \end{array}\right.$$ }

In case all characteristics are incoming on one side, i.e. $i_+ = n$
or $i_- = n$, a shock is called {\em extreme}.\\

{\parindent=0pt {\bf Viscous Classification.} A complete description
of the viscous connection requires the further 
compressibility index $l$, where $l$ is defined as in (H3). In case
the connection is ``maximally'' transverse:
\begin{equation}\label{l-index} l = \left\{\begin{array}{lcr} 1\qquad&
\mbox{Lax or undercompressive case}\\ i-n & \mbox{overcompressive
case}\end{array}\right.\end{equation} we call the shock ``pure''
type, and classify it according to its hyperbolic type. Otherwise,
we call it ``mixed'' under/overcompressive type. Throughout this
paper, {\em we assume all viscous profiles are of pure, hyperbolic
type.}}\\

For further discussions, see \cite[Section 1.2]{Z2} or \cite[Section
1.2]{Z3}, and the references therein.

\subsection{The uniform Evans stability condition} The linearized equations of \eqref{sys} about $\bar U$ are
\begin{equation}\label{intro-lind}
 U_t = LU :=
\sum_{j,k}( B^{jk} U_{x_k})_{x_j} - \sum_{j}( A^{j} U)_{x_j}
\end{equation}
with initial data $U(0)=U_0$. Here, $B^{jk} := B^{jk}(\bU(x_1))$ and
$A^j U:= dF^j(\bU(x_1))U - [dB^{j1}(\bU(x_1))U]\bU_{x_1} (x_1).$

 A necessary condition for linearized
stability is {\em weak spectral stability}, defined as nonexistence
of unstable spectra $\Re \lambda
>0$ of the linearized operator $L$ about the wave. As described in
\cite{Z2,Z3}, this is equivalent to nonvanishing for all $\tilde
\xi\in \RR^{d-1}$, $\Re \lambda>0$ of the {\it Evans function}
$$
D_L(\tilde \xi, \lambda),
$$ (see equation \eqref{Evans1} in Appendix \ref{app-Evans}) a Wronskian associated with the Fourier-transformed eigenvalue
ODE. Let $\zeta = (\tilde \xi,\lambda)$. Introduce polar coordinates
$\zeta = \rho \hat \zeta$, with $\hat \zeta = (\hat{\tilde \xi},\hat
\lambda) \in S^d$. We also define $S^d_+ = S^d \bigcap \{\R\hat
\lambda \ge 0\}$.

\begin{definition}\label{strongspectral}
We define {\it strong spectral stability} as {\it uniform Evans
stability}:\medbreak

{\parindent=0pt (D) $D_L(\hat \zeta,\rho)$ vanishes to precisely
$l^{th}$ order at $\rho=0$ for all $\hat \zeta \in S^d_+$ and has no
other zeros in $S^d_+ \times \bar \RR_+$, where $l$ is the
compressibility index defined as in (H3) and \eqref{l-index}.}
\end{definition}

The spectral stability of arbitrary-amplitude shocks can be checked
efficiently by numerical Evans computations as in
\cite{HLyZ1,HLyZ2}.

\subsection{The GMWZ result}\label{L2stab-est} We recall the recent result of Gu\`es, M\'etivier, Williams, and
 Zumbrun for low-frequency regimes, and refer the reader to their
 original papers for the detail of statements and the proof.

\begin{theorem}[\cite{GMWZ6}, Theorems 3.7 and 3.9; \cite{GMWZ1}, Section 8]\label{theo-stab-est}Assume (A1)-(A3),
(H0)-(H3), and (H4').

Then, the strong spectral stability condition (D) implies the $L^2$
uniform stability estimate for low-frequency regimes (precisely
stated below, \eqref{max}, Section \ref{GMWZresults}).
\end{theorem}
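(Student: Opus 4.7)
The plan is to follow the strategy of \cite{GMWZ6} and construct degenerate Kreiss symmetrizers for the resolvent system at low frequencies, then combine them with the Evans stability condition (D) to extract a uniform $L^2$ resolvent estimate. After Fourier transform in the transverse variables $\tilde x = (x_2,\ldots,x_d)$ and Laplace transform in $t$, the resolvent equation $(\lambda - L)U = F$ reduces to a first-order ODE system $Z' = \mA(x_1,\zeta)Z + \mathcal{F}$ on $x_1 \in \mathbb{R}$ with spectral parameter $\zeta = (\tilde\xi, \lambda)$, where the partially symmetric hyperbolic-parabolic structure of \eqref{symmetric} together with \eqref{ellcond} dictates a specific form for $\mA$. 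The boundary conditions at $x_1 = \pm\infty$ are that $Z$ lie in the stable subspace of $\mA(+\infty,\zeta)$ and the unstable subspace of $\mA(-\infty,\zeta)$.

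Following the microlocal framework of \cite{GMWZ6}, I would decompose the low-frequency sphere $\hat\zeta \in S^d_+$ via a partition of unity adapted to the block structure of the limiting matrices $\mA(\pm\infty,\zeta)$. Under (A1)--(A3) and (H1)--(H3), away from glancing points $\mA(\pm\infty,\zeta)$ splits into hyperbolic blocks (where a Friedrichs-type symmetrizer suffices, using (A1)), elliptic blocks (handled by parabolic energy estimates using \eqref{ellcond} and (A2)), and parabolic blocks (treated via the partially symmetric structure). At glancing frequencies, the semisimple constant-multiplicity part of (H4') is handled by the classical Kreiss perturbation argument, while the totally nonglancing part is handled by the generalized block-structure theorem of \cite{GMWZ6}, producing on each block a smooth degenerate Kreiss symmetrizer $\Sigma$ that is self-adjoint and whose real part controls the outgoing modes with the expected loss.

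Patching these local symmetrizers via the partition of unity yields a global symmetrizer $\Sigma(x_1,\zeta)$. Testing the ODE against $\Sigma Z$ and integrating by parts, absorbing commutator errors in the standard way, gives the desired $L^2$ estimate; condition (D) enters as the compatibility statement that the unstable manifold from $-\infty$ and the stable manifold from $+\infty$ meet transversally up to the expected $l$-dimensional intersection coming from \eqref{l-index}, which is exactly what closes the argument. The main difficulty, and the reason (H4') replaces (H4)--(H5) of earlier work, lies in building a uniform symmetrizer across points where the multiplicity of the characteristic roots of $\sum_j \xi_j dF^j(U_\pm)$ jumps: in the constant-multiplicity setting one could diagonalize smoothly and reduce to a two-variable perturbation, but under (H4') one must instead exploit the totally nonglancing condition to construct $\Sigma$ directly on the full-size block and verify uniformity as $\zeta$ crosses the variable-multiplicity locus. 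This technical construction, which is what unlocks the MHD application, is the content of Sections 4--6 of \cite{GMWZ6} and is the step I would cite rather than reproduce.
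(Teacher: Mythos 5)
Your sketch is an accurate high-level description of the Kreiss-symmetrizer machinery in \cite{GMWZ6}, which is indeed what the paper cites as the backbone of this theorem. However, the paper does not itself reprove this theorem: it treats the underlying construction as a black box and instead addresses two gaps that arise in transplanting the cited results into the present setting, and your proposal misses both. First, the estimate \eqref{max} in the form stated (with the crucial $\rho^2$ degeneracy on the outgoing parabolic block coming from the symmetrizer $S_P=\diag(K\,Id,-\rho^2)$ in \eqref{Spform}) was established in \cite[Section~8]{GMWZ1} under the full constant-multiplicity hypothesis (H4), not under (H4'). The paper's observation is that the variable-multiplicity relaxation only touches the hyperbolic block $H$, so one may patch the \cite{GMWZ6} hyperbolic-block symmetrizer (valid under (H4')) onto the unchanged degenerate parabolic-block symmetrizer of \cite{GMWZ1}. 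Your sketch talks about constructing symmetrizers block by block but does not identify that the degenerate $\rho^2$ weight — which is what the subsequent $L^1\to L^p$ argument in Section~\ref{sec-L1Lp} actually uses — comes from the parabolic block and is untouched by the change from (H4) to (H4').

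Second, and more importantly for what the paper actually contributes here, the \cite{GMWZ1} estimate was proved only for Lax shocks. The step that converts the Evans condition (D) into the boundary estimate \eqref{bd-est}, namely Lemma 7.1 of \cite{GMWZ1} (reproduced and extended as the lemma at the end of Appendix~\ref{app-Evans}), relies on the exact order of vanishing of the Evans determinant at $\rho=0$ being $l$, and in \cite{GMWZ1} this was verified only for $l=1$, $i=n+1$. Your appeal to ``transversality up to the expected $l$-dimensional intersection'' gestures at the right statement but does not establish that the boundary operator $\Gamma$ degenerates to exactly order $\rho$ on the $l$-dimensional subspace $\cE_{-,\phi}$ in \eqref{Ephi} and is nondegenerate on a uniformly bounded complement, which is what Proposition~\ref{GMWZprop}(2b)--(2d) prove and what \eqref{bd-est} requires. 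That extension to over- and undercompressive profiles ($l=i-n>1$ or $i\le n$) is carried out in Appendix~A of the paper and is the genuinely new content behind quoting the theorem for all three shock types; omitting it leaves the claimed scope of the theorem unjustified. A smaller point: the paper (following \cite{GMWZ1}) works in the ``doubled'' half-line boundary-value formulation \eqref{1steqstW}, which is what makes $\Gamma$ a bona fide boundary operator and lets the Evans determinant be written as in \eqref{Evans-doubled}; your formulation on the whole line $x_1\in\RR$ with stable/unstable subspaces at $\pm\infty$ is equivalent in principle but would require re-deriving the boundary-degeneracy lemma in that setting.
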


\begin{example}[\cite{GMWZ6}, Section 8] \textup{Fast Lax' shocks for viscous MHD equations satisfy
the structural assumptions of Theorem
\ref{theo-stab-est}.}\end{example}

However, it is also shown that

\begin{counterexample}[\cite{GMWZ6}, Section 8]\label{counterex} \textup{Slow Lax' shocks for viscous MHD equations do not satisfy
the structural assumption (H4'), and thus Theorem
\ref{theo-stab-est} does not apply to these
cases.}\\\end{counterexample}

\subsection{Main results}
Our main results are as follows.

\begin{theorem}[Linearized stability]\label{theo-lin}
Assuming (A1)-(A3), (H0)-(H3), (H4'), and (D), we obtain the
asymptotic $L^1\cap H^{[(d-1)/2]+2} \rightarrow L^p$ stability of
\eqref{intro-lind} for all three types of shocks in dimensions $d\ge
3$, for any $2\le p\le \infty$, with rates of decay
\begin{equation}\begin{aligned} |U(t)|_{L^2}&\le C
(1+t)^{-\frac{d-2}{4}} |U_0|_{L^1\cap L^2},\\
|U(t)|_{L^p}&\le C (1+t)^{-\frac{d-1}{2}(1-1/p) + \frac1{4}}
|U_0|_{L^1\cap H^{[(d-1)/2]+2}},
\end{aligned}\end{equation}
provided that the initial perturbations $U_0$ are in $L^1\cap L^2$ for $p=2$, or in $L^1 \cap H^{[(d-1)/2]+2}$ for $p>2$.
\end{theorem}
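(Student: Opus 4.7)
The strategy is the standard low/high frequency decomposition of the Green's function for the linearized operator $L$, in the framework of \cite{Z2,Z3,Z4}. I would take the Laplace transform in $t$ and the Fourier transform in the transverse variables $\tilde x=(x_2,\dots,x_d)$, reducing the resolvent equation $(\lambda-L)U=f$ to a parametrized first-order ODE in $x_1$ with spectral parameter $\zeta=(\tilde\xi,\lambda)$, solved by a Green's kernel $G_\zeta(x_1,y_1)$. Introducing a smooth cutoff at $|\zeta|=r$ small, one writes $U(t)=U_L(t)+U_H(t)$ via the inverse Fourier--Laplace integral against $G_\zeta$. The high-frequency part $U_H$ requires no new input: Zumbrun's nonlinear energy estimates supply the needed $H^s$ solution operator bounds for general symmetrizable hyperbolic--parabolic systems, directly applicable here.

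The main new step is a low-frequency $L^1_{y_1}\to L^p_{x_1}$ bound on $G_\zeta$, obtained not by pointwise Green's function construction but from the Kreiss-symmetrizer $L^2$ estimate of Theorem \ref{theo-stab-est}. I would express $G_\zeta$ via the exponential dichotomy of the first-order ODE at $x_1=\pm\infty$: decaying bases from $\pm\infty$ matched across $y_1$ by an Evans-type Wronskian $W(\zeta)$, whose inverse is precisely what the GMWZ estimate controls in $L^2$. Since the dichotomy bases decay exponentially in $|x_1-y_1|$ at a rate uniform down to $\zeta=0$, an $L^2$ bound on $W(\zeta)^{-1}$ converts by Cauchy--Schwarz into a pointwise-in-$(x_1,y_1)$ bound on $G_\zeta$, hence a uniform $L^1\to L^p$ bound. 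The order-$l$ vanishing of the Evans function at $\rho=0$, which would otherwise produce a singular $\rho^{-l}$ factor, is absorbed by peeling off the $l$-dimensional ``excited'' subspace spanned by the parameter derivatives of the profile family from (H3); these modes are projected out using the manifold structure in (H3) together with (D), and their contributions amount to bounded shifts of shock location that are handled separately.

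Once the low-frequency $L^1\to L^p$ resolvent bound is in hand, the claimed decay rates follow from a stationary-phase/Hausdorff--Young analysis of the inverse Fourier integral in $\tilde\xi$, as in Section 5 of \cite{Z3}. The leading rate $(1+t)^{-(d-1)(1-1/p)/2}$ reflects transverse hyperbolic dispersion in $d-1$ variables; the additional $t^{1/4}$ loss present only in dimensions $d\ge 3$ is exactly the cost of dropping (H5), since without the glancing set being a finite union of smooth curves one cannot invoke the sharp multi-variable stationary phase of \cite{Z3} and must settle for a cruder estimate losing a fractional power. I expect the main obstacle to be the first step: converting the Kreiss-weighted $L^2$ bound into uniform pointwise control on $G_\zeta$ near $\zeta=0$, in particular matching the weights intrinsic to the degenerate symmetrizer of \cite{GMWZ6} with the spatial decay of the dichotomy, and correctly peeling off the $l$ kernel modes so that no singular $\rho^{-l}$ factor survives in the final bound.
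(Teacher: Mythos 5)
Your overall framework (low/high frequency split, high frequency handled by Zumbrun's energy estimates, low frequency requiring the GMWZ $L^2$ input) matches the paper's. But the mechanism you propose for the low-frequency $L^1\to L^p$ resolvent bound is not what the paper does, and it has a genuine gap.

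You propose to reconstruct a pointwise bound on the resolvent kernel $G_\zeta(x_1,y_1)$ from the exponential dichotomy bases, asserting that these ``decay exponentially in $|x_1-y_1|$ at a rate uniform down to $\zeta=0$,'' and then use the GMWZ $L^2$ estimate to control the Wronskian. This uniform-decay claim is false precisely where it matters: near $\rho=0$ the hyperbolic ($H$) modes decay only at rate $O(\rho)$ or $O(\rho^2)$ and the slow parabolic ($P_-$) modes at rate $O(\rho^2)$, which is the entire source of the difficulty. If one tries to build $G_\zeta$ pointwise one is forced to diagonalize the glancing/variable-multiplicity blocks to track these degenerate rates, which is exactly the machinery that requires (H4)--(H5) and that the paper is designed to avoid. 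The GMWZ estimate is a weighted $L^2$ maximal bound on the solution, not a bound on the matching Wronskian, and it does not convert cheaply into a pointwise kernel estimate via Cauchy--Schwarz against exponentially decaying bases.

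What the paper actually does (Proposition \ref{prop-resLF}) is bypass kernel construction entirely: it takes the real part of the inner product of each block equation in \eqref{1steqsZ} against $u_H$, $u_{P_\pm}$, integrates over $[0,x_1]$ to get $L^\infty_{x_1}$ control in terms of boundary values, $L^2$ norms, and $|F|_{L^1}$, then feeds in the degenerate-symmetrizer $L^2$ estimate \eqref{max-est} with weights $\rho$, $1$, $\rho^2$ on the three blocks. Young's inequality closes the estimate and yields $|U|_{L^p}\lesssim \rho^{-3/2}|f|_{L^1}$ directly, with no kernel and no block diagonalization. The $x_1$-derivative case ($\beta=1$) is handled by the Kreiss--Kreiss trick (an auxiliary one-dimensional problem). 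Your claim that the $t^{1/4}$ loss reflects a cruder stationary-phase estimate is also off: the loss comes from the exponent $\rho^{-3/2}$ in the resolvent bound (versus $\rho^{-1}\gamma_2$ under (H5)), which after integrating $e^{-\theta_1(k^2+|\txi|^2)t}\rho^{-3/2}$ over $\Gamma^\txi$ costs $t^{1/4}$ compared to the sharp rate. Finally, the $l$-fold vanishing of the Evans function is absorbed via the GMWZ boundary degeneracy lemma (Lemma 7.1/\eqref{bd-est}) and the $\rho^2$ weight in the $P_-$ symmetrizer block, not by a separate ``peeling off'' of shock-location modes as you describe.
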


\begin{theorem}[Nonlinear stability]\label{theo-nonlin}
Assuming (A1)-(A3), (H0)-(H3), (H4'), and (D), we obtain the
asymptotic $L^1\cap H^s \rightarrow L^p \cap H^s$ stability for 
Lax or overcompressive 
shocks in dimension $d\ge 3$ and 
undercompressive 
shocks in dimensions $d\ge 5$, for $s\ge s(d)$ as defined in (H0),
and any $2\le p\le \infty$, with rates of decay
\begin{equation}\begin{aligned} &|\tilde U(t)-\bU|_{L^p}\le C
(1+t)^{-\frac{d-1}{2}(1-1/p) + \frac1{4}} |U_0|_{L^1\cap
H^s}\\&|\tilde U(t)-\bU|_{H^s}\le C (1+t)^{-\frac{d-2}4}
|U_0|_{L^1\cap H^s},
\end{aligned}\end{equation}
provided that the initial perturbations $U_0:=\tilde U_0 - \bU$ are
sufficiently small in $L^1 \cap H^s$.
\end{theorem}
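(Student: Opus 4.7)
The plan is to reduce the nonlinear stability problem to the linearized bounds of Theorem \ref{theo-lin} by a Duhamel iteration, combined with an auxiliary high-frequency $H^s$ energy estimate already available in the literature under our hypotheses. Set $U := \tilde U - \bU$, which satisfies $U_t = LU + \mathcal{N}(U)$ where, by (A3) and a change back to the conserved variables, $\mathcal{N}(U)$ is of quadratic form in $(U,U_x)$ and can be written in divergence form, $\mathcal{N}(U)=\sum_j \partial_{x_j} Q^j(U,U_x)$, with $Q^j = \cO(|U|^2+|U||U_x|)$. In the Lax and overcompressive cases I would follow \cite{Z3,Z4} and introduce a time-dependent modulation: write $U(x,t)=V(x,t)+\sum_{j=1}^{l}\delta_j(t)\,\partial_{\delta_j}\bU^{\delta}|_{\delta=0}(x_1)$, where $\{\bU^\delta\}$ is the $l$-parameter family furnished by (H3), and select $\delta(t)$ so as to annihilate the $l$-dimensional zero-mode of $L$ at the spectral level dictated by (D).

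Applying Duhamel's formula one obtains
\begin{equation*}
V(t) \;=\; e^{tL}U_0 \;+\; \int_0^t e^{(t-s)L}\,\mathcal{N}(U)(s)\,ds \;-\;(\text{modulation correction}),
\end{equation*}
together with a scalar ODE-type relation fixing $\dot\delta(t)$ in terms of the same Duhamel integral. I would split $e^{tL}=S_{\mathrm{lf}}(t)+S_{\mathrm{hf}}(t)$, where $S_{\mathrm{lf}}$ is the low-frequency piece obeying the $L^1\cap L^2 \to L^p$ and $L^1\cap H^{[(d-1)/2]+2}\to L^p$ bounds of Theorem \ref{theo-lin}, and $S_{\mathrm{hf}}$ is controlled by the Kawashima-type nonlinear energy estimate of \cite{Z3,Z4} (valid for symmetrizable systems under (A1)-(A3), (H0)-(H2)), giving $|S_{\mathrm{hf}}(t)U_0|_{H^s}\le C e^{-\eta t}|U_0|_{H^s}$ with exponential decay in $H^s$ modulo the low-frequency part.

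To close the iteration I would introduce the norm
\begin{equation*}
\zeta(t):=\sup_{0\le s\le t}\Big[(1+s)^{(d-1)(1-1/p)/2-1/4}|U(s)|_{L^p}+(1+s)^{(d-2)/4}|U(s)|_{H^s}+(1+s)^{(d-2)/4}(|\delta(s)|+|\dot\delta(s)|)\Big]
\end{equation*}
and prove a closed inequality $\zeta(t)\le C(|U_0|_{L^1\cap H^s}+\zeta(t)^2)$. The quadratic nonlinearity is bounded in $L^1\cap H^{s-1}$ by $|U|_{L^2}(|U|_{L^\infty}+|U|_{H^s})$ via Moser-type estimates, and absorbing the extra $\partial_x$ either by integration by parts on the Green kernel at cost $(t-s)^{-1/2}$ for short times, or by moving it onto $Q^j$ for long times, reduces matters to convolution integrals of the schematic form $\int_0^t (1+t-s)^{-\alpha}(1+s)^{-2\beta}\,ds$ whose boundedness in $t$ dictates the dimensional thresholds.

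The main obstacle, as expected, lies in closing this iteration for \emph{undercompressive} shocks. In the Lax and overcompressive cases the modulation absorbs the zero-mass-type obstruction and the convolution integrals converge in $d\ge 3$ despite the $t^{1/4}$ loss inherited from dropping (H5). In the undercompressive case, however, there is no such cancellation of mass and one must rely on sharper pointwise-type bounds of the Green's function, as in \cite{Z3}; the additional $t^{1/4}$ loss then forces the dimensional threshold up to $d\ge 5$, exactly so that $(d-1)(1-1/p)/2-1/4$ exceeds the threshold $1$ required for the quadratic source to be time-integrable against the Green kernel. The remainder of the argument is bookkeeping of decay exponents and standard continuous induction, following \cite{Z2,Z3,Z4} essentially verbatim once the improved low-frequency bounds of Theorem \ref{theo-lin} are inserted in place of the ones used there.
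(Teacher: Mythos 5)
The overall scaffolding you describe — Duhamel iteration with a low/high frequency split, a Kawashima-type auxiliary $H^s$ damping estimate to control high frequencies, a bootstrap norm $\zeta(t)$ tracking weighted $L^2/L^\infty$ and $H^s$ decay, and continuous induction to close — is indeed the paper's approach, and you correctly identify that the $t^{1/4}$ loss (from dropping (H5)) drives the dimensional thresholds $d\ge 3$ (Lax/overcompressive) and $d\ge 5$ (undercompressive). However, there is one genuine structural deviation: you introduce a time-dependent shift modulation $\delta(t)$ and propose to absorb the $l$-dimensional zero mode by writing $U=V+\sum_j\delta_j(t)\,\partial_{\delta_j}\bU^\delta|_{\delta=0}$, adding $|\delta|+|\dot\delta|$ to the bootstrap norm. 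The paper does \emph{not} do this, and for good reason: in multi-dimensions the low-frequency solution operator $\cS_1(t)$ already decays algebraically — the bound \eqref{boundcS1} of Proposition \ref{prop-estS} is proved \emph{without} subtracting any zero-mode projection, because the Evans condition (D) (vanishing to order exactly $l$ at $\rho=0$, uniformly in $\hat\zeta$) plus dispersion in the transverse $\tx$-directions already yield $(1+t)^{-(d-2)/4}$ decay in $L^2$. A purely time-dependent $\delta(t)$ cannot correctly capture the would-be nondecaying mode anyway, since for $\txi\ne 0$ the relevant "zero mode" depends on the transverse frequency; a genuine multi-d modulation would require a phase $\delta(\tx,t)$ as in some refined pointwise analyses, which is a much heavier machinery than needed here. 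So you should drop the modulation ansatz entirely: Duhamel applied to the unshifted perturbation $U$ plus the bounds on $\cS_1,\cS_2$ closes the argument. Also a small point of attribution: the modulation is a feature of the 1D theory (Mascia--Zumbrun), not of the multi-d works \cite{Z3,Z4} you cite for it. Aside from this, your convolution bookkeeping is in the right spirit, though the precise threshold for the undercompressive case comes from the fact that with $\alpha=1$ the $x_1$-derivative gives no extra $(t-s)^{-1/2}$ decay, so one needs $\frac{d-2}{2}>1$ (i.e.\ $d\ge 5$) for the quadratic source $(1+s)^{-(d-2)/2}$ to be time-integrable against $(1+t-s)^{-(d-2)/4}$ without logarithmic loss — not quite the criterion you stated.
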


\begin{remark}\label{rate}
\textup{ The price of 
dropping Hypothesis (H5) is that the obtained rate of decay is
degraded by $t^{1/4}$ as comparing to those established in
\cite{Z2,Z3,Z4} or Theorem \ref{theo-stabH5} below. Therefore the
rates are possibly not sharp. In fact, we believe that the sharp
rate of decay in $L^2$ is rather that of a $d$-dimensional heat
kernel and the sharp rate of decay in $L^\infty$ dependent on the
characteristic structure of the associated inviscid equations, as in
the constant-coefficient case \cite{HoZ1,HoZ2}.}
\end{remark}

Our next main result addresses the stability 
for the two--dimensional case that is not covered by the above
theorems. We remark here that as shown in \cite{Z3}, page 321,
Hypothesis (H5) is automatically satisfied in dimensions $d = 1, 2$
and in any dimension for rotationally invariant problems. Thus, in
treating the two--dimensional case, we assume this hypothesis
without making any further restriction on structure of the systems.
Also since it turns out that the proof does not depend on the
dimensions, we state (and prove) the theorem in a general form as
follows, recovering previous results of K. Zumbrun (see
\cite[Theorem 5.5]{Z3}) for ``uniformly inviscid stable'' Lax or
over--compressive shocks with same decay rates.

\begin{theorem}[Two-dimensional case or cases with (H5)]\label{theo-stabH5}
Assume the same hypotheses as in Theorems \ref{theo-lin} and
\ref{theo-nonlin} with additional assumption (H5). Then Lax or
over--compressive shocks are asymptotically nonlinearly $L^1\cap H^s
\rightarrow L^p \cap H^s$ stable in dimensions $d\ge 2$, for any
$2\le p\le \infty$, with rates of decay
\begin{equation}\begin{aligned} &|\tilde U(t)-\bU|_{L^p}\le C
(1+t)^{-\frac {d-1}2(1-1/p)} |U_0|_{L^1\cap H^s}\\&|\tilde
U(t)-\bU|_{H^s}\le C (1+t)^{-\frac{d-1}4} |U_0|_{L^1\cap H^s},
\end{aligned}\end{equation}
provided that the initial perturbations $U_0:=\tilde U_0 - \bU$ are
sufficiently small in $L^1 \cap H^s$. Similar statement can be
stated for linearized stability with same decay rates.
\end{theorem}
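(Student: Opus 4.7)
The plan is to follow the low-frequency/high-frequency decomposition strategy used throughout the paper, but under the additional hypothesis (H5) to obtain sharper resolvent bounds that eliminate the $t^{1/4}$ loss incurred in the proofs of Theorems \ref{theo-lin}--\ref{theo-nonlin}. The reason such sharpening is available is that (H5) confines the glancing set to a finite union of smooth curves on which the branching eigenvalue has constant multiplicity, so the standard two-variable matrix perturbation techniques developed in \cite{GMWZ1} and \cite{Z3} for the glancing blocks become applicable, even though the eigenvalues of $\sum_j \xi_j dF^j(U_\pm)$ need only satisfy (H4') rather than the original constant-multiplicity condition (H4).

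First, I would combine the low-frequency $L^2$ Kreiss-symmetrizer estimate from Theorem \ref{theo-stab-est} (\cite{GMWZ6}) with the block-reduction under (H5) carried out in \cite{GMWZ1}, Section 8. The key observation is that (H4') supplies the generalized block structure away from glancing, while (H5) supplies the classical two-variable reduction at glancing points; splicing these two structures gives a full block decomposition of the resolvent symbol on the low-frequency set $\{|\zeta|\le r_0\}$. From this, an $L^1\to L^p$ resolvent bound of the form used in \cite{Z3} follows by a standard Green-function-by-frequency-decomposition argument, yielding decay of order $(1+t)^{-\frac{d-1}{2}(1-1/p)}$ without the $t^{1/4}$ penalty that appears when (H5) is dropped.

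Next, I would invoke the high-frequency solution operator bounds, which, as the author notes, have already been established by Zumbrun through nonlinear energy estimates for symmetrizable systems and are insensitive to the distinction between (H4) and (H4'). Combining them with the low-frequency bounds produces the claimed linearized $L^1\cap H^s\to L^p\cap H^s$ decay rates, matching the rates of \cite[Theorem 5.5]{Z3}. For the nonlinear step, I would carry out the standard Duhamel iteration appropriate to Lax and overcompressive profiles: introduce a shock-location parameter $\delta(\tilde \xi, t)$ projecting onto the $l$-dimensional zero eigenspace at $\zeta=0$ from (H3) and Definition \ref{strongspectral}, decompose the perturbation into the translational front-tracking piece plus a faster-decaying remainder, and close a bootstrap using Moser-type nonlinear estimates justified by the $H^s$ regularity assumed in (H0); the undercompressive restriction to $d\ge 5$ of Theorem \ref{theo-nonlin} does not appear here because the improved decay rates under (H5) give enough integrability of the nonlinear source in any dimension $d\ge 2$ for the Lax/overcompressive case.

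The main obstacle will be the low-frequency resolvent bound: one must simultaneously accommodate the variable-multiplicity block structure from \cite{GMWZ6}, which a priori is only an $L^2$ statement, and the (H5)-based pointwise/frequency-localized glancing analysis from \cite{Z3}, in order to extract the sharp $L^1\to L^p$ decay. This is exactly the bookkeeping that the author indicates is carried out "in a more direct way" in Section \ref{sec-H5}; once it is in place, the rest of the argument is a routine repetition of the machinery of \cite{Z2,Z3,Z4}.
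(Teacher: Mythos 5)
Your high-level roadmap matches the paper's: sharpen the low-frequency resolvent bound under (H5) to eliminate the $t^{1/4}$ loss, black-box the high-frequency solution-operator estimate, and then close a Duhamel iteration as in \cite{Z3}. The final step of the paper does nothing more than prove Proposition~\ref{prop-resLFH5} and then declare that the rest is ``word by word'' the proof of \cite[Theorem~5.5]{Z3}, so your reference to the shock-location/phase-tracking machinery of \cite{Z2,Z3} is consistent with what is actually being invoked.

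However, the concrete route you propose to the key low-frequency resolvent bound does not line up with the paper's, and as written it leaves the decisive step unexplained. You propose to ``combine'' the $L^2$ maximal estimate \eqref{max-est} (from Theorem~\ref{theo-stab-est}/Section~\ref{GMWZresults}) with a block reduction under (H5), citing \cite[Section~8]{GMWZ1}. But Section~\ref{sec-H5} explicitly does \emph{not} use \eqref{max-est} at all --- the author states that the argument there ``is completely independent of previous sections.'' Instead the paper follows the pointwise, block-by-block strategy of \cite[Section~12]{GMWZ1} (not Section~8, which is where the degenerate symmetrizer/$L^2$ estimate lives): after the MZ conjugation it splits $U$ into parabolic/elliptic/hyperbolic/glancing/totally-nonglancing modes, diagonalizes the glancing blocks via Lemma~\ref{lem-gmwz}, estimates each diagonal block with the elementary ODE Lemma~\ref{lem-bounds} using $S=\pm\mathrm{Id}$, and matches the resulting weights against the boundary degeneracies in \eqref{newbd-est} to obtain the $\rho^{-1}\gamma_2$ bound of \eqref{res-boundH5}. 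Quoting \eqref{max-est} and doing a ``standard Green-function-by-frequency-decomposition'' would reproduce the $\rho^{-3/2}$ bound of Proposition~\ref{prop-resLF} and with it the $t^{1/4}$ loss you are trying to remove; the extra $\rho^{-1/2}$ is exactly what the block-by-block argument trades for the integrable factor $\gamma_2$. So the mechanism by which the sharp rate is actually produced is missing from your proposal.

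The other gap is that you do not say how the totally nonglancing modes --- which are present precisely because you only assume (H4'), not (H4) --- are incorporated into the (H5)-based glancing diagonalization. This is the genuinely new ingredient of Section~\ref{sec-H5}: the paper introduces an extra block $U_{H_t}$, observes via \cite[Lemma~5.3]{GMWZ6} that these blocks admit definite (positive or negative) symmetrizers, and applies Lemma~\ref{lem-bounds} with $\theta=\rho^2$ to get \eqref{totalnongl-est}, which then slots into the boundary estimate \eqref{newbd-est} alongside the other modes. Your phrase ``splicing these two structures gives a full block decomposition'' glosses over exactly the point that required new work; without it, the resolvent bound \eqref{res-boundH5} (and hence the theorem) is not established.
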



%


\subsection{Discussion and open problems}\label{discussions} As
observed in \cite{Z3,Z4}, the high-frequency estimate on the
solution operator has already been established without the
structural assumptions (H4)-(H5), mainly relying on the damping
energy estimates. Hence we shall use it here as a black box. We 
would like to draw the reader's attention to our recent work in
\cite{NZ2} for a great simplification of this original
high-frequency argument, requiring higher regularity of the forcing
$f$ (to credit, the simplification was based on an argument
introduced in \cite{KZ} for relaxation shocks).

The difficulty of relaxing Hypothesis (H4) and dropping (H5),
extending results in \cite{Z2,Z3,Z4} obtained by pointwise bound
approach, is that 
there and in \cite{GMWZ1} the authors apply the diagonalization of
glancing blocks, where the hypotheses are required, to obtain rather
sharp bounds on resolvent kernel and resolvent solution. We rather
use the $L^2$ stability bound more directly, avoiding to get sharp
bounds on the adjoint problem where the diagonalization of glancing
blocks must be applied (see Section 12, \cite{GMWZ1}), and as a
consequence, avoiding the diagonalization error (denoted by $\beta$
in \cite{GMWZ1} or $\gamma_2$ in \cite{Z3}) at
the expense of slightly degraded decay, 
comparing to those reported in \cite{Z2,Z3,GMWZ1}. However, the loss
$t^{1/4}$ of decay is still sufficient to close our analysis for
dimensions $d\ge 3$ in the Lax or overcompressive case and for $d\ge
5$ in the undercompressive case. As already mentioned at the
beginning of the paper, this $L^1\to L^p$ resolvent bound will be the key 
to the improvement. 

Our analysis indeed applies to all applications covered by the GMWZ
small viscosity theory. Hence, the 
remaining open problem is to
treat
cases that are not covered by the GMWZ theory, that is, the cases
when the structural assumption (H4') of Theorem \ref{L2stab-est} is
not satisfied or more generally when the generalized block structure
fails. Counterexample \ref{counterex} is showing one of such
interesting but untreated cases, violating the structural assumption
(H4').

It is also worth mentioning that the undercompressive shock analysis
was carried out in \cite{Z3} only in nonphysical dimensions $d\ge
4$, and thus still remains open in dimensions for $d\le 3$ for
systems with or without assumptions (H4)-(H5). Finally, in our
forthcoming paper \cite{N2}, we have been able to carry out the
analysis for boundary layers in dimensions $d\ge 2$, extending our
recent results in \cite{NZ2} to systems with variable
multiplicities. It turns out that the analysis for the boundary
layer case is quite more delicate than those for the case of Lax or
overcompressive shocks that we are studying here.

\section{Linearized estimates}\label{sec-lind}
The linearized equations of \eqref{sys} about the profile $\bar U$
are
\begin{equation}\label{lind}
 U_t = LU :=
\sum_{j,k}( B^{jk} U_{x_k})_{x_j}
- \sum_{j}( A^{j} U)_{x_j}
\end{equation}
with initial data $U(0)=U_0$.

Then, we obtain the following proposition.
\begin{proposition}\label{prop-estS} Under the hypotheses of Theorems \ref{theo-lin} and \ref{theo-nonlin}, the solution operator $\cS(t):=e^{Lt}$ of the linearized equations may be decomposed into low frequency and high frequency parts (defined precisely below) as $\cS(t) = \cS_1(t) + \cS_2(t)$ satisfying
\begin{equation}\label{boundcS1}
\begin{aligned} |\cS_1(t) \partial_{x_1}^{\beta_1}\partial_{\tx}^{\tilde \beta} f|_{L^p_x} \le& C
(1+t)^{-\frac{d-1}{2}(1-1/p)+\frac 14- \frac{|\tilde
\beta|}2-(1-\alpha)\frac{\beta_1}{2}}|f|_{L^1_x}
\end{aligned}\end{equation} for all $2\le p\le \infty$, $d\ge 3$, and $\beta =
(\beta_1,\tilde\beta)$ with $\beta_1=0,1$ and $\alpha$ defined as
\begin{equation}\label{alpha}\alpha := \left\{\begin{array}{ll}0
&\mbox{  for Lax or overcompressive case},\\1 &\mbox{  for
undercompressive case,} \end{array}\right.
\end{equation} and
\begin{equation}\label{boundcS2}
\begin{aligned}|\partial_{x_1}^{\gamma_1}\partial_{\tx}^{\tilde\gamma}\cS_2(t)f|_{L^2} &\le C
e^{-\theta_1t}|f|_{H^{|\gamma_1|+|\tilde \gamma|}},\end{aligned}\end{equation} for $\gamma = (\gamma_1,\tilde\gamma)$ with $\gamma_1=0,1$.
\end{proposition}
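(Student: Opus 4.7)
The plan is to combine the Fourier--Laplace representation of $\cS(t)$ with the GMWZ low-frequency $L^2$ resolvent bound (Theorem \ref{theo-stab-est}) to obtain the claimed $L^1\to L^p$ estimate for $\cS_1$. The high-frequency bound \eqref{boundcS2} is not the new content here: defining $\cS_2$ by a cutoff in Fourier--Laplace variables above a fixed threshold, one invokes Zumbrun's damping energy estimates (recalled in Section \ref{discussions}), which hold for general symmetrizable systems and require neither (H4) nor (H5). I will therefore concentrate on $\cS_1$.

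For $\cS_1$, I would write
\begin{equation*}
\cS_1(t) f(x) \;=\; \frac{1}{(2\pi)^{d-1}} \int_{\RR^{d-1}} e^{i\tilde\xi\cdot\tilde x} \Big[\frac{1}{2\pi i}\int_\Gamma e^{\lambda t}\, \chi(\lambda,\tilde\xi)\, (\lambda - L_{\tilde\xi})^{-1} \hat f(x_1,\tilde\xi)\, d\lambda\Big] d\tilde\xi,
\end{equation*}
with $L_{\tilde\xi}$ the symbol of $L$ after Fourier transform in $\tilde x$, $\chi$ a low-frequency cutoff on $\rho=|(\lambda,\tilde\xi)|$, and $\Gamma$ a suitable vertical contour. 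The key step is to convert the GMWZ $L^2\to L^2$ resolvent estimate into an $L^1\to L^p$ bound on the resolvent Green's kernel $G_\lambda(x_1,y_1;\tilde\xi)$. I would do this by applying the GMWZ estimate to the resolvent equation with a $y_1$-localized source to obtain an $L^2_{x_1}$ bound uniform in $y_1$, and dually applying it to the adjoint problem (whose Kreiss symmetrizer is constructed by the same mechanism in \cite{GMWZ6}) to get an $L^2_{y_1}$ bound uniform in $x_1$. Cauchy--Schwarz then delivers the $L^1_{y_1}\to L^\infty_{x_1}$ bound, and interpolation with the direct $L^2\to L^2$ estimate covers all intermediate $p$.

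With this in hand, I would deform $\Gamma$ to an optimal Riemann-saddlepoint contour with $\Re\lambda \sim -\rho^2$ in polar coordinates $\zeta=\rho\hat\zeta$, carry out the $\lambda$-integration, and estimate the $\tilde\xi$-integral by Parseval for $p=2$ and by Hausdorff--Young for $p>2$. The $(d-1)$-dimensional transverse Fourier integral produces the factor $(1+t)^{-\frac{d-1}{2}(1-1/p)}$; each tangential derivative $\partial_{\tx}$ contributes $|\tilde\xi|\sim t^{-1/2}$. The derivative $\partial_{x_1}$ in the Lax or overcompressive case may be absorbed into the ODE representation of the resolvent kernel (the integrable convected part associated with $\bU_{x_1}$), yielding an additional $t^{-1/2}$; in the undercompressive case the surface-wave component of the kernel blocks this gain, which is reflected in the factor $(1-\alpha)$.

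The main obstacle will be establishing the low-frequency $L^1\to L^p$ resolvent bound of the sharpest possible order in $\rho$. Because the direct GMWZ estimate is only $L^2\to L^2$, and because we refuse to diagonalize the glancing blocks (which would require (H5)), we cannot recover the pointwise Green's kernel bounds of \cite{Z3,GMWZ1}. Consequently, when the $L^2$-based resolvent bound is integrated along $\Gamma$, we lose one half power of $\rho$, producing the $t^{1/4}$ degradation in \eqref{boundcS1} relative to the pointwise approach. Verifying that this single half power is the only loss---that is, that the singular contribution of the glancing set has integrable $L^2$-mass in $\lambda$ on the scale $\rho^{-1/2}$---is the technical core of the argument, and follows directly from the degenerate Kreiss symmetrizer bounds of \cite{GMWZ6} applied to the Fourier--Laplace resolvent equation.
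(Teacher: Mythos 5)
Your high-level framework (Fourier--Laplace representation, contour $\Gamma^{\tilde\xi}$ with $\Re\lambda\sim-\rho^2$, Parseval/interpolation at the end) matches the paper, and the high-frequency treatment is the same. But the heart of the proposition is the low-frequency $L^1\to L^p$ resolvent bound (Proposition \ref{prop-resLF}), and there your argument has two genuine problems.

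First, the Cauchy--Schwarz step does not produce what you claim. From $\sup_{y_1}|G_\lambda(\cdot,y_1)|_{L^2_{x_1}}$ (forward estimate with point source) you get an $L^1_{y_1}\to L^2_{x_1}$ bound; from $\sup_{x_1}|G_\lambda(x_1,\cdot)|_{L^2_{y_1}}$ (adjoint estimate) you get an $L^2_{y_1}\to L^\infty_{x_1}$ bound. Neither of these, nor any interpolation between them, yields an $L^1\to L^\infty$ bound: that would require a pointwise estimate $\sup_{x_1,y_1}|G_\lambda(x_1,y_1)|$, which does not follow from mixed $L^2$ kernel norms. Separately, the GMWZ estimate \eqref{max-est} controls $|u|$ in terms of $|F|_{L^1}$-type pairings of $F$ against $u$ (after Young), not in terms of a delta-distribution forcing, so the premise of ``applying the $L^2$ estimate to a $y_1$-localized source'' is already delicate.

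Second, and more to the point of the paper's contribution, your dual route \emph{reintroduces the dependence on the adjoint problem}, which is exactly what Section \ref{sec-L1Lp} is designed to sidestep. The paper explicitly notes (Section \ref{discussions}) that sharp bounds on the adjoint/Green's kernel in \cite{GMWZ1,Z3} required diagonalization of the glancing blocks, i.e.\ Hypothesis (H5); the novelty here is an argument that never goes to the dual. The mechanism is elementary: one takes the block-diagonalized first-order system \eqref{1steqsZ}, pairs each block equation with $u_H$, $u_{P_\pm}$ and integrates in $x_1$ to convert the $L^2$ control provided by \eqref{max-est} into $L^\infty$ control directly (equations \eqref{key-est}--\eqref{key-est2}): $|u_H|_{L^\infty}^2 \lesssim |u_H(0)|^2 + \rho|u_H|_{L^2}^2 + |F_H|_{L^1}^2$, etc. No Green's kernel, no adjoint symmetrizer, no (H5). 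This direct integration trick is the missing idea in your proposal. It is also what makes the $\rho^{-3/2}$ exponent (hence the $t^{1/4}$ loss) transparent: a factor $\rho^{-1}$ from the GMWZ degeneracy plus $\rho^{-1/2}$ from combining the $L^2$ and $L^\infty$ pieces.

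Finally, your treatment of $\beta_1=1$ is too vague to be checked: ``absorbing $\partial_{x_1}$ into the convected part of the kernel'' does not give the stated $\rho^{(1-\alpha)}$ gain. The paper instead uses a Kreiss--Kreiss splitting $U = V + U_1$ where $V$ solves the one-dimensional auxiliary problem $(L_0 - \lambda_0)V = \partial_{x_1}f$ with $\lambda_0 = \rho$; the 1D Green bounds \eqref{ptbounds}, \eqref{ptbounds-der} (or, independently, Appendix~\ref{app-auxi}) give $|V|_{L^p}+|V_{x_1}|_{L^p}\lesssim\rho^{-\alpha}|f|_{L^1}$, and $U_1$ then solves the eigenvalue equation with forcing of size $\rho\cO(|V|+|V_{x_1}|)$, so the $\beta=0$ bound applies. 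The distinction $\alpha=0$ vs.\ $\alpha=1$ comes cleanly from the different decay of $\partial_{y_1}G^0_{\lambda_0}$ in the Lax/overcompressive vs.\ undercompressive cases.
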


Here, we use the same decomposition of solution operator $\cS(t)$ as
in the article of K. Zumbrun \cite{Z3}; see (5.152)--(5.153) in
\cite{Z3} or \eqref{cS1} below.

\subsection{High--frequency estimate} We observe that our relaxed
Hypothesis (H4') and the dropped Hypothesis (H5) only play a role in
low--frequency regimes. Thus, in course of obtaining the
high--frequency estimate \eqref{boundcS2}, we make here the same
assumptions as were made in \cite{Z3}, and therefore the same
estimate remains valid as claimed in \eqref{boundcS2} under our
current assumptions. We omit to repeat its proof here, and refer the
reader to the article \cite{Z3}, (5.16), Proposition 5.7, for the
original proof. See also a great simplification in \cite{NZ2},
Proposition 3.6 in treating the boundary layer case.

In the remaining of this section, we shall focus on proving the
bounds on low-frequency part $\cS_1(t)$ of the linearized solution
operator.

Taking the Fourier transform in $\tilde x:=(x_2,\dots,x_d)$ of
linearized equation \eqref{lind}, we obtain a family of eigenvalue
ODE
\begin{equation}\label{F-lind}\begin{aligned}
\lambda U=L_{\tilde \xi }U:=\overbrace{(B_{11}U')'-(A_1U)'}^{L_0U} -
&i\sum_{j\not=1}A_j\xi_jU + i\sum_{j\not=1}B_{j1}\xi_jU'
\\&+i\sum_{k\not=1}(B_{1k}\xi_kU)' -
\sum_{j,k\not=1}B_{jk}\xi_j\xi_k U.
\end{aligned}\end{equation}

\subsection{$L^2$ stability estimate for low frequencies}\label{GMWZresults} We
briefly recall the procedure (see \cite{GMWZ1}, page 75--85) of
reducing the eigenvalue equations to the block structure equations
and stating the $L^2$ estimate for low-frequency regimes by the
construction of degenerate symmetrizers.

Let $U=(u^I,u^{II})^T$ a solution of eigenvalue equations, that is,
$(L_\txi - \lambda)U =f$ where $L_\txi$ is defined as in
\eqref{F-lind}. Following \cite[Section 2.4]{Z3}, consider the
variable $W$ as usual
$$W:=\begin{pmatrix}u^{I}\\u^{II}\\z\end{pmatrix}$$ with $z:=b_1^{11}u^I_{x_1}
+ b^{11}_2u^{II}_{x_1}$. Then we can write equations of $W$ as a
first order system
\begin{equation}\label{1steqsW} \partial_{x_1} W =
\cG(x_1,\lambda,\txi)W+F,\end{equation}with
$F:=(A_*^{-1}f^I,0,f^{II})^{tr}$, where $f = (f^I,f^{II})^{tr}$ and
$A_*:=A^1_{11} - A^1_{12}(b_2^{11})^{-1}b_1^{11}$; thus, in
particular, $|F|\le C|f|$, for some constant $C$. It is not
necessary for us to carry out in detail the form of $\cG$; though,
see equation (2.65) of \cite{Z3}. Indeed, we are only interested in
the fact that bounds in $L^p$ of $W$ will give those of $U$ in the
same norm.

We go further as in \cite[page 75]{GMWZ1} to write this $(n+r)
\times(n+r)$ system on $\RR$ as an equivalent $2(n+r) \times 2(n+r)$
``doubled'' boundary problem on $x_1\ge 0$:
\begin{equation}\label{1steqstW} \begin{aligned}&\partial_{x_1} \tilde W = \tilde \cG(x_1,\lambda,\txi)\tilde W+\tilde F\\&\Gamma \tilde W =0 \mbox{  on  }x_1=0\end{aligned}\end{equation} where \begin{equation} \begin{aligned}& \tilde W (x_1,\lambda,\txi) = (W_+,W_-), \\ &
\tilde \cG(x_1,\lambda,\txi) = \begin{pmatrix}\cG_+ & 0\\0&-\cG_-\end{pmatrix},\\
&\tilde F = \begin{pmatrix} F_+ \\-F_-\end{pmatrix},\\& \Gamma
\tilde W = W_+ - W_-\end{aligned}\end{equation} with $F_\pm(x_1) :=
F(\pm x_1)$.

For small or bounded frequencies $(\lambda,\txi)$, we use the known
MZ conjugation; see, for example, \cite{MeZ1} or \cite[Lemma
5.1]{GMWZ1}. That is, given any
$(\underline{\lambda},\underline{\txi})\in \RR^{d+1}$, there is a
smooth invertible matrix $\Phi(x_1,\lambda,\txi)$ for $x_1\ge 0$ and
$(\lambda,\txi)$ in a small neighborhood of
$(\underline{\lambda},\underline{\txi})$, such that \eqref{1steqstW}
is equivalent to
\begin{equation}\label{1steqsY} \partial_{x_1} Y = \cG_+(\lambda,\txi)Y+\tilde{\tilde F}, \quad \tilde \Gamma(\lambda,\txi)Y= 0\end{equation}
where $\cG_+(\lambda,\txi) := \tilde \cG(+\infty,\lambda,\txi),
\tilde W = \Phi Y, \tilde{\tilde F}=\Phi^{-1}\tilde F $ and $\tilde
\Gamma Y:= \Gamma\Phi Y$.

Next, there are smooth matrices $V(\lambda,\txi)$ such that
\begin{equation} \label{eq-bl}V^{-1}\cG_+ V =
\begin{pmatrix}H&0\\0&P\end{pmatrix}\end{equation} with blocks
$H(\lambda,\txi)$ and
\begin{equation}\label{para-bl}P(\lambda,\txi)=\begin{pmatrix}P_+&0\\0&P_-\end{pmatrix}\end{equation}
satisfying the eigenvalues $\mu$ of $P_\pm$ in $\{\pm\R\mu\ge c>0\}$
and
$$\begin{aligned}H(\lambda,\txi) &= H_0(\lambda,\txi) +
\cO(\rho^2)\\H_0(\lambda,\txi):&=-(A_{+}^1)^{-1}\Big((i\tau +
\gamma)A^0_+ + \sum_{j=2}^di\xi_jA^j_+\Big).\end{aligned}$$

Define variables $Z=(u_H,u_P)^T$ with $u_P:=(u_{P_+},u_{P_-})^T$ as
$$\tilde W = \Phi Y = \Phi V Z,\qquad \bar \Gamma Z:= \Gamma \Phi V
Z,$$ and $ (F_H,F_P)^T = V^{-1}\tilde{\tilde F}$. We have
\begin{equation}\label{1steqsZ}\partial_{x_1}\begin{pmatrix}u_H\\u_{P_\pm}\end{pmatrix}
= \begin{pmatrix}H&0\\0&{P_\pm}\end{pmatrix}
\begin{pmatrix}u_H\\u_{P_\pm}\end{pmatrix} +
\begin{pmatrix}F_H\\F_{P_\pm}\end{pmatrix}, \quad \bar \Gamma Z =
0.\end{equation}

Let $\wprod{\cdot,\cdot}$ denote the standard $L^2$ product over
$[0,\infty)$, that is, $$\wprod{f,g} = \int_0^\infty f(x_1)\bar
g(x_1) dx_1, \qquad \forall ~f,g\in L^2(0,\infty),$$where $\bar g$
is the complex conjugate of $g$.

Then, recalling that $\rho=|(\tilde \xi,\lambda)|$ and $\gamma =
\R\lambda$, we obtain the maximal stability estimate for the low
frequency
regimes (\cite{GMWZ6,GMWZ1}): 
\begin{equation}\label{max}\begin{aligned} (\gamma+\rho^2)|u_H|^2_{L^2}+|u_{P_+}|^2_{L^2} &+\rho^2|u_{P_-}|^2_{L^2}+ |u_H(0)|^2+|u_{P_+}(0)|^2+\rho^2|u_{P_-}(0)|^2\\&\lesssim
|\wprod{SF_{P_+},u_{P_+}}|+|\wprod{SF_{P_-},u_{P_-}}|+|\wprod{SF_{H},u_{H}}|
\end{aligned}\end{equation}
where $S$ is the degenerate symmetrizer constructed in \cite{GMWZ1}
(see equations (8.2),(8.2), and (6.18)) as follows
\begin{equation}\label{Sform}S = \begin{pmatrix} S_P
&0\\0&S_H\end{pmatrix}\end{equation} and
\begin{equation}\label{Spform}S_P = \begin{pmatrix} K ~Id
&0\\0&-\rho^2\end{pmatrix}\end{equation} for sufficiently large
constant $K$ (independent of small parameter $\rho$); here, $Id$ is
the identity matrix and the two subblocks in $S_P$ have the same
sizes as those of $P_\pm$ in \eqref{para-bl}, correspondingly. Here
and throughout the paper, by $f \lesssim g$, we mean $f \le C g$,
for some positive constant $C$ independent of $\rho$.

There are two possibly subtle points in quoting \eqref{max} that we
would like to 
point out, namely, (i) the estimate \eqref{max} was proved in
Section 8, \cite{GMWZ1}, under the assumption (H4), but not under
the relaxed Hypothesis (H4'), and (ii) the estimate was obtained
only for the Lax shock case. However, in the first matter, the
variable multiplicity assumption is only involved in the hyperbolic
part (the $H$ block in \eqref{1steqsZ}) and the parabolic blocks
$P_\pm$ remain the same. Thus, the degenerate Kreiss-type
symmetrizers techniques (only involved in the parabolic blocks)
introduced in
\cite{GMWZ1} 
can still be applicable here. For the hyperbolic part, we now use
the recent construction of Kreiss-type symmetrizers in \cite{GMWZ6}
that applies to the relaxed Hypothesis (H4'), thus yielding the
$L^2$ estimate for this block. In dealing with the second matter, we
recall that 
a crucial step in the analysis of \cite{GMWZ1} for the Lax shock
case was to proving the 
``right'' degeneracy of the boundary operator or Lemma 7.1 in
\cite{GMWZ1}, connecting with the Evans stability condition (D). We
then observe that with slight modification of the proof, the lemma
remains unchanged for the under/over--compressive shock
case, yielding the same result. 
For sake of completeness, we shall recall the proof of Lemma 7.1,
\cite{GMWZ1}, with a straightforward extension to other cases
than the Lax case in Appendix A.

In other words, with our above observations, 
we may use the estimate \eqref{max} as stated under our current
assumptions in
treating all three types of shocks. 
In addition, thanks to that fact that the symmetrizer $S$ is
degenerate with order $\rho^2$ in the block $P_{-}$ (see
\eqref{Sform},\eqref{Spform} above), we can further estimate
\eqref{max} as
\begin{equation}\label{max-est}\begin{aligned} (\gamma+\rho^2)|u_H|^2_{L^2}+|u_{P_+}|^2_{L^2} &+\rho^2|u_{P_-}|^2_{L^2}+ |u_H(0)|^2+|u_{P_+}(0)|^2+\rho^2|u_{P_-}(0)|^2
\\&\lesssim
\wprod{|F_{P_+}|,|u_{P_+}|}+\rho^2\wprod{|F_{P_-}|,|u_{P_-}|}+\wprod{|F_{H}|,|u_{H}|}
.\end{aligned}\end{equation}

We note that in 
a final step 
in \cite[equation (8.11)]{GMWZ1}, the standard Young's inequality
was used to absorb all terms of $(u_H,u_P)$ into the left-hand side,
leaving the $L^2$ norm of $F$ alone in the right hand side. For our
purpose, we shall keep it as stated in \eqref{max-est}. 

We remark also that as shown in \cite{GMWZ1}, all of coordinate
transformation matrices are uniformly bounded. Thus a bound on $Z =
(u_H,u_P)^T$ would yield a corresponding bound on the solution $U$.

\subsection{$L^1\to L^p$ estimates}\label{sec-L1Lp} We establish the $L^1\to L^p$ resolvent bounds for solutions
of eigenvalue equations $(L_\txi-\lambda)U = f$ in the low frequency
regime; specifically, we are interested in regime of parameters
restricting to the surface
\begin{equation}\label{gamma}\Gamma^{\tilde \xi}:=\{\lambda~:~\R\lambda =
-\theta_1(|\tilde \xi|^2 + |\I\lambda|^2)\},\end{equation} for
$\theta_1>0$ and $|(\tilde \xi,\lambda)|$ sufficiently small. The
curve $\Gamma^\txi$ was introduced in \cite[equation (4.26)]{Z2}.
Introducing $\Gamma^\txi$ is in fact regarded as a key to the
analysis of long-time stability in multidimensions. The main point
here is that even though $\lambda$ enters into the stable complex
half-plane ($\{\R \lambda <0\}$), $\Gamma^\txi$ remains outside of
the essential spectrum of limiting linearized operators
$L_{\txi,\pm}$; see \cite[Lemma 2.21]{Z3}.

In addition, in a related matter, we would like to recall that the
Kreiss' symmetrizers constructed by O. Gu\`es, G. M\'etivier, M.
Williams, and K. Zumbrun can be attained in a full neighborhood of
basepoint $(\underline \xi,\underline \lambda)$ even for
$\Re\underline \lambda=0$ (see, e.g., Theorem 3.7, \cite{GMWZ6}).
Thus, the $L^2$ estimate \eqref{max-est} is in fact still valid in
any region of $$\gamma \ge - \theta (|\tau|^2 + |\txi|^2)$$ for
$\theta$ sufficiently small. In particular, we shall use
\eqref{max-est} for $\lambda$ restricted on the curve $\Gamma^\txi$.

We obtain the following:

\begin{proposition}[Low-frequency bounds]\label{prop-resLF} Under the hypotheses of Theorem \ref{theo-nonlin}, for $\lambda \in \Gamma^{\tilde \xi}$ and $\rho :=|(\tilde
\xi,\lambda)|$, $\theta_1$ sufficiently small, there holds the
resolvent bound \begin{equation}\label{res-bound} |(L_{\tilde
\xi}-\lambda)^{-1}\partial_{x_1}^\beta f|_{L^p(x_1)} \le
C\rho^{-3/2+(1-\alpha)\beta}| f|_{L^1(x_1)},\end{equation} for all
$2\le p\le \infty$, $\beta =0,1$, and $\alpha$ defined as in
\eqref{alpha}.
\end{proposition}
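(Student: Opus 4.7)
The plan is to deduce the $L^1\to L^p$ resolvent bound from the $L^2$ maximal estimate \eqref{max-est} via a bootstrap argument on $|U|_{L^\infty}$, combining explicit Duhamel representations on the parabolic blocks with a one-dimensional Sobolev embedding on the hyperbolic block. On $\Gamma^{\tilde\xi}$ one has $\gamma\sim -\theta_1\rho^2$, so $\gamma+\rho^2\sim\rho^2$ and \eqref{max-est} retains its full $\rho^2$-weighted form; its validity in the region $\gamma<0$ is justified by the fact emphasized in Section~\ref{sec-L1Lp} that the degenerate Kreiss symmetrizers of \cite{GMWZ6} are constructed on a full neighborhood of any basepoint, including $\gamma=0$. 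First I would reduce to the block system \eqref{1steqsZ} for $Z=(u_H,u_{P_+},u_{P_-})^{T}$ with source $F=(F_H,F_{P_+},F_{P_-})^{T}$, $|F|\lesssim|f|$; since the conjugation matrices are uniformly bounded, any $L^p$ bound on $Z$ transfers to $U$.

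On each block I would then extract pointwise information. For the parabolic blocks, $\Re\mu(P_\pm)\gtrless\pm c>0$ yields the Duhamel representations $|u_{P_+}|_{L^\infty}\lesssim|F|_{L^1}$ and $|u_{P_-}|_{L^\infty}\lesssim|u_{P_-}(0)|+|F|_{L^1}$, and the trace $|u_{P_-}(0)|$ is controlled from the weighted boundary term in \eqref{max-est} (after bounding $\wprod{|F|,|u|}\le|f|_{L^1}|u|_{L^\infty}$) by $\rho^{-1}\bigl(|f|_{L^1}|u|_{L^\infty}\bigr)^{1/2}$. For the hyperbolic block, whose coefficient matrix satisfies $|H|\lesssim\rho$ from the form of $H_0$ and its $\cO(\rho^2)$ correction, the one-dimensional Sobolev inequality $|u_H|_{L^\infty}^2\lesssim|u_H|_{L^2}|u_H'|_{L^2}+|u_H(0)|^2$, combined with $u_H'=Hu_H+F_H$ and the max-estimate bounds $|u_H|_{L^2}\lesssim\rho^{-1}(|f|_{L^1}|u|_{L^\infty})^{1/2}$ and $|u_H(0)|\lesssim(|f|_{L^1}|u|_{L^\infty})^{1/2}$, produces a bootstrap inequality whose closure gives $|u|_{L^\infty}\lesssim \rho^{-3/2}|f|_{L^1}$. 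Interpolating with the $L^1\to L^2$ bound, which carries the same $\rho^{-3/2}$ order by the same chain of inequalities, delivers the full range $2\le p\le\infty$ for $\beta=0$.

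For $\beta=1$, I would integrate by parts in a duality pairing against a test $g\in L^{p'}$ to shift the derivative onto the adjoint resolvent; in the Lax or overcompressive case ($\alpha=0$) a factor of $\rho$ is gained from the \emph{right} degeneracy of the boundary operator under the Evans condition (D) (Lemma 7.1 of \cite{GMWZ1}, extended in Appendix~A of the present paper to all shock types), leaving the $\rho^{-1/2}$ rate, whereas the undercompressive case ($\alpha=1$) admits no such cancellation and retains the full $\rho^{-3/2}$ loss. The principal technical obstacle is the hyperbolic block: along $\Gamma^{\tilde\xi}$ its solution operator has only $\cO(\rho)$-strength damping, so the bootstrap must be closed by carefully balancing $L^2$-norms, boundary traces, and derivative losses, each carrying different $\rho$-weights; a secondary subtlety is keeping the adjoint/duality manipulations compatible with the block decomposition in the $\beta=1$ case, where the boundary cancellation from (D) must be tracked simultaneously with the splitting into hyperbolic and parabolic components.
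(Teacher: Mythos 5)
Your $\beta=0$ argument follows the paper's strategy of pairing an $L^\infty$ estimate on the block system with the $L^2$ maximal estimate \eqref{max-est} and bootstrapping, but the specific route through the hyperbolic block has a genuine gap. You invoke the Sobolev interpolation $|u_H|_{L^\infty}^2\lesssim |u_H|_{L^2}|u_H'|_{L^2}+|u_H(0)|^2$ together with $u_H'=Hu_H+F_H$; this forces you to control $|u_H'|_{L^2}\lesssim\rho|u_H|_{L^2}+|F_H|_{L^2}$, and the term $|F_H|_{L^2}$ cannot be bounded by $|f|_{L^1}$ --- precisely the datum the proposition allows. The paper sidesteps this by integrating the energy identity $\tfrac12\,\partial_{x_1}|u_H|^2=\Re(Hu_H\cdot\bar u_H)+\Re(F_H\cdot\bar u_H)$ over $[0,x_1]$ and bounding $\int F_H\cdot\bar u_H$ by $|F_H|_{L^1}|u_H|_{L^\infty}$ \emph{before} ever separating the two factors into $L^2$ norms; this yields $|u_H|_{L^\infty}^2\lesssim|u_H(0)|^2+\rho|u_H|_{L^2}^2+|F_H|_{L^1}|u_H|_{L^\infty}$, a bound closable in terms of $|f|_{L^1}$ alone. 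The same remark applies to the parabolic blocks, where your appeal to ``Duhamel representations'' should similarly produce $|F_{P_\pm}|_{L^1}$, not $|F_{P_\pm}|_{L^2}$; as stated your proposal conflates the two.

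The $\beta=1$ step is where your proposal diverges substantively and incorrectly. You propose a duality pairing against the adjoint resolvent and attribute the extra factor of $\rho$ in the Lax/overcompressive case to the ``right degeneracy of the boundary operator under (D)'' (Lemma 7.1 of \cite{GMWZ1}). This misidentifies both the mechanism and the method. The boundary degeneracy lemma is what makes the maximal estimate \eqref{max-est} hold at all; it does not produce any additional $\rho$-gain when a derivative hits the forcing. Moreover, the paper explicitly emphasizes (Section~\ref{discussions}) that one of its key simplifications is to \emph{avoid} passing to the adjoint problem, since that is exactly where the earlier analyses were forced to diagonalize glancing blocks and therefore needed (H5). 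The paper's actual $\beta=1$ argument is the Kreiss--Kreiss splitting $U=V+U_1$: $V$ solves the one-dimensional auxiliary problem $(L_0-\rho)V=\partial_{x_1}f$ on $\RR$, and the pointwise 1D Green-kernel bounds \eqref{ptbounds}--\eqref{ptbounds-der} (from Mascia--Zumbrun, quoted via \cite{Z3}, Proposition 4.22 and Lemma 5.23) give $|V|_{L^p}+|V_{x_1}|_{L^p}\lesssim\rho^{-\alpha}|f|_{L^1}$, with $\alpha=0$ in the Lax/overcompressive case because there the derivative of the one-dimensional Green kernel has an extra $\rho$ rather than a non-decaying $e^{-\theta|y_1|}$ contribution. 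Then $U_1$ solves the original eigenvalue problem with source $\rho\,\cO(|V|+|V_{x_1}|)$, so the already-established $\beta=0$ bound applied to $U_1$ supplies the remaining factor and the triangle inequality finishes. The distinction $\alpha=0$ versus $\alpha=1$ thus reflects the 1D characteristic structure at the profile, not the Evans-condition boundary estimate; your proposal provides no mechanism that would substitute for this.
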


\begin{proof} 

Changing variables as above subsection and taking the inner product
of each equation in \eqref{1steqsZ} against $u_H$ and $u_{P_\pm}$,
respectively, and integrating the results over $[0,x_1]$, for
$x_1>0$, we obtain
\begin{equation}\begin{aligned} \frac 12 |u_H(x_1)|^2 &= \frac 12|u_H(0)|^2 +
\R\int_0^{x_1}(H(\lambda,\txi)u_H\cdot u_H  + F_H\cdot u_H)dz,
\\\frac 12 |u_{P_\pm}(x_1)|^2 &= \frac 12|u_{P_\pm}(0)|^2 + \R\int_0^{x_1}({P_\pm}(\lambda,\txi)u_{P_\pm}\cdot u_{P_\pm}
 + F_{P_\pm}\cdot u_{P_\pm})dz.\end{aligned}\end{equation}

This together with use of Young's inequality into the last terms
involved in $F$ and the facts that $|H|\le C\rho$ and $|P_\pm|\le C$
yields
\begin{equation}\label{key-est}\begin{aligned} |u_H|_{L^\infty(x_1)}^2 &\lesssim |u_H(0)|^2 + \rho|u_H|_{L^2}^2 + |F_H|^2_{L^1},
\\|u_{P_\pm}|_{L^\infty(x_1)}^2 &\lesssim |u_{P_\pm}(0)|^2 + |u_{P_\pm}|_{L^2}^2+|F_{P_\pm}|^2_{L^1}.\end{aligned}\end{equation}

We are now in position of applying the $L^2$ stability estimate
\eqref{max-est}. In \eqref{key-est}, multiplying both sides of
equations of $u_H$ by $\rho$, of $u_{P_+}$ by $1$, and of $u_{P_-}$
by $\rho^2$, adding up results, and applying \eqref{max-est}, we
obtain
\begin{equation}\label{key-est1}\begin{aligned}\rho^2(|u_H|^2_{L^2}&+|u_P|^2_{L^2}) + \rho|u_H|_{L^\infty}^2+|u_{P_+}|_{L^\infty}^2
+\rho^2|u_{P_-}|_{L^\infty}^2
\\&\lesssim
\wprod{|F_{P_+}|,|u_{P_+}|}+\rho^2\wprod{|F_{P_-}|,|u_{P_-}|}+\wprod{|F_{H}|,|u_{H}|}
+|F_H|_{L^1}^2 + |F_P|_{L^1}^2,\end{aligned}\end{equation}(noting
that $\rho$ is assumed to be small; in particular, $\rho \le 1$.)

Applying again the standard Young's inequality:
$$\begin{aligned}
\langle|F_{H}|&,|u_{H}|\rangle+\wprod{|F_{P_+}|,|u_{P_+}|}+\rho^2\wprod{|F_{P_-}|,|u_{P_-}|}\\&\lesssim
\epsilon\Big[\rho|u_H|_{L^\infty}^2+|u_{P_+}|_{L^\infty}^2+\rho^2|u_{P_-}|_{L^\infty}^2\Big]
+C_\epsilon\Big[\rho^{-1}|F_H|_{L^1}^2+|F_{P_+}|_{L^1}^2+\rho^2|F_{P_-}|_{L^1}^2\Big]
\end{aligned}$$
with $\epsilon>0$ being sufficiently small, from \eqref{key-est1},
we easily arrive at
\begin{equation}\label{key-est2}\begin{aligned}\rho^2(|u_H|^2_{L^2}&+|u_P|^2_{L^2}) + \rho|u_H|_{L^\infty}^2+|u_{P_+}|_{L^\infty}^2
+\rho^2|u_{P_-}|_{L^\infty}^2
\\&\lesssim
\rho^{-1}|F_H|_{L^1}^2+|F_{P_+}|_{L^1}^2+\rho^2|F_{P_-}|_{L^1}^2+|F_H|_{L^1}^2
+ |F_P|_{L^1}^2.\end{aligned}\end{equation}

Therefore in term of $Z = (u_H,u_P)^t$, simplifying the above yields
\begin{equation}\label{estZ}\begin{aligned}\rho^2|Z|_{L^2(x_1)}^2+ \rho^2|Z|_{L^\infty(x_1)}^2\le
C\rho^{-1}|F|_{L^1}^2\end{aligned}\end{equation}Now from the change
of variables $Z = V^{-1}\Phi^{-1}\tilde W$, we have the same
estimates for $\tilde W$ and thus $U$, because all coordinate
transformation matrices are uniformly bounded. Hence, we also obtain
bounds \eqref{estZ} for $U$ or by the interpolation inequality:
\begin{equation}\label{res-boundU} |U|_{L^p(x_1)} \le
C\rho^{-3/2}| f|_{L^1(x_1)}.\end{equation} This thus proves the
proposition in the case of $\beta=0$.


For $\beta =1$, we expect that $\partial_{x_1} f$ plays a role as
``$\rho f$'' forcing. Recall that the eigenvalue equations $(L_\txi
- \lambda)U =\partial_{x_1}f$ read
\begin{equation}\label{eg-eqs}\begin{aligned}\overbrace{(B^{11}U_{x_1})_{x_1}-(A^1U)_{x_1}}^{L_0U} - &i\sum_{j\not=1}A^j\xi_jU +
i\sum_{j\not=1}B^{j1}\xi_jU_{x_1}
\\&+i\sum_{k\not=1}(B^{1k}\xi_kU)_{x_1} -
\sum_{j,k\not=1}B^{jk}\xi_j\xi_k U - \lambda U =\partial_{x_1}f.
\end{aligned}\end{equation}

Now modifying the nice argument of Kreiss--Kreiss presented in
\cite{KK,GMWZ1}, we write $U = V + U_1$, where $V$ satisfies
\begin{equation}\label{auxeqs} (L_0-\lambda_0) V =
\partial_{x_1} f,\qquad x_1\in \RR,\end{equation}
for $\lambda_0 = \rho$. Noting that $A^1$ and $B^{11}$ depend on
$x_1$ only, we thus can apply here the one--dimensional Green kernel
bounds investigated by C. Mascia and K. Zumbrun as follows.

Let $G^0_{\lambda_0}$ be the Green kernel of $\lambda_0 - L_0$.
Observe that our assumptions as projected on one--dimensional
situations (i.e., $\tilde \xi =0$) are still the same as those in
\cite{Z3}. Thus, we apply Proposition 4.22 in \cite{Z3} for
\eqref{auxeqs}, noting that $\lambda_0=\rho$ is sufficiently small.
After a simplification, we simply obtain
\begin{equation} \label{ptbounds} |G^0_{\lambda_0}(x_1,y_1)|
\le C [\rho^{-1}e^{-\theta
|x_1|}e^{-\rho|y_1|}+e^{-\rho|x_1-y_1|}],\end{equation} and
\begin{equation} \label{ptbounds-der}
|\partial_{y_1}G^0_{\lambda_0}(x_1,y_1)| \le C [\rho^{-1}e^{-\theta
|x_1|}(\rho e^{-\theta \rho |y_1|}+\alpha
e^{-\theta|y_1|})+e^{-\rho|x_1-y_1|}(\rho +\alpha
e^{-\theta|y_1|})],\end{equation} where $\alpha$ is defined as in
\eqref{alpha}. We would like to remark here that 
Lemma 5.23, \cite{Z3}, gives the estimate \eqref{ptbounds-der} with
$\alpha=0$ only for the Lax or overcompressive shocks. For the
undercompressive shocks, we must have the weaker bound by the term
$e^{-\theta|y_1|}$, that is, $\alpha=1$ (for further discussion,
see, e.g., equations below (5.106), \cite{Z3}).

Hence, using \eqref{ptbounds-der} and applying the standard
Hausdorff-Young's inequality, we obtain
\begin{equation}\label{Vbound} |V|_{L^p(x_1)}+ |V_{x_1}|_{L^p(x_1)}
\lesssim |f|_{L^1(x_1)} + \alpha \rho^{-1}|f|_{L^1(x_1)}\lesssim
\rho^{-\alpha}|f|_{L^1(x_1)} ,\end{equation} for all $1\le p\le
\infty$ and $\alpha=0$ or $1$ defined as in \eqref{alpha}.

Now from $U_1 = U-V$ and equations of $U$ and $V$, we observe that
$U_1$ satisfies
\begin{equation}\label{U1eqs}\begin{aligned}(L_\txi  - \lambda )U_1 = L(V,V_{x_1}),
\end{aligned}\end{equation} where $$ \begin{aligned}L(V,V_{x_1}) :&=
i\sum_{j\not=1}A^j\xi_jV - i\sum_{j\not=1}B^{j1}\xi_jV_{x_1}
-i\sum_{k\not=1}(B^{1k}\xi_kV)_{x_1} \\&\qquad \qquad+
\sum_{j,k\not=1}B^{jk}\xi_j\xi_k V + (\lambda-\lambda_0) V\\&=
\rho\cO(|V|+|V_{x_1}|).\end{aligned}$$

Therefore applying the result which we just proved for $\beta=0$ to
the equations \eqref{U1eqs}, we obtain
\begin{equation}\label{U1bound}\begin{aligned}|U_1|_{L^p(x_1)} &\le
C\rho^{-3/2}|L(V,V_{x_1})|_{L^1(x_1)}\le C\rho^{-3/2}\rho
\Big[|V|_{L^1}+|V_{x_1}|_{L^1}\Big] \\&\le
C\rho^{-3/2+(1-\alpha)}|f|_{L^1(x_1)}.\end{aligned}\end{equation}

Bounds \eqref{Vbound}, \eqref{U1bound} on $V$ and $U_1$ clearly give
our claimed bounds on $U$ by triangle inequality: $$|U|_{L^p} \le
|V|_{L^p}+|U_1|_{L^p}.$$

We thus obtain the proposition.
\end{proof}

\begin{remark}\label{rem-BS}\textup{Under the general structural assumptions,
our proof of the $L^1\to L^p$ bounds above depends only on the $L^2$
maximal estimate \eqref{max-est}. As the GMWZ theory covers to a
more general case than (H4$'$), namely, the (BS) condition
(Definition 4.9, \cite{GMWZ6}), our results thus apply to this case
as well without any additional work.}
\end{remark}

\begin{remark}\textup{In Appendices \ref{app-auxi} and \ref{indep-G}, we will
prove a slightly-weaker resolvent estimate like \eqref{res-bound} in
which the Green kernel bounds \eqref{ptbounds}, \eqref{ptbounds-der}
will not be used. Thus, our main results can in fact be derived
completely independent of the pointwise Green function estimates. }
\end{remark}

\subsection{Estimates on the solution operator}\label{sec-estS1} In this subsection, we complete the proof of Proposition \ref{prop-estS}. As mentioned earlier, it suffices to prove the bounds for $\cS_1(t)$, where the low frequency solution
operator $\cS_1(t)$ is defined as
\begin{equation}\label{cS1}\cS_1(t):=\frac{1}{(2\pi i)^d}\int_{|\tilde \xi|\le
r}\oint_{\Gamma^\txi\cap \{|\lambda|\le r\}} e^{\lambda t + i\tilde
\xi \cdot\tilde x}(L_{\txi} - \lambda)^{-1} d\lambda
d\txi.\end{equation}

\begin{proof}[Proof of bounds on $\cS_1(t)$] We first prove
\eqref{boundcS1} for $\beta=0$. Let $\hat u(x_1,\txi,\lambda)$
denote the solution of $(L_\txi-\lambda)\hat u = \hat f$, where
$\hat f(x_1,\txi)$ denotes Fourier transform of $f$, and
$$u(x,t):=\cS_1(t)f = \frac{1}{(2\pi i)^d}\int_{|\txi|\le r}\oint _{\Gamma^\txi\cap \{|\lambda|\le r\}}
e^{\lambda t+i\txi \cdot \tx}(L_\txi - \lambda)^{-1}\hat
f(x_1,\txi)d\lambda d\txi.$$

Using Parseval's identity, Fubini's theorem, the triangle
inequality, and Proposition \ref{prop-resLF}, we may estimate
$$\begin{aligned} |u|_{L^2(x_1,\tx)}^2(t) &=
\frac{1}{(2\pi)^{2d}}\int_{x_1} \int_{|\txi|\le
r}\Big|\oint_{\Gamma^\txi\cap \{|\lambda|\le r\}} e^{\lambda t}\hat
u(x_1,\txi,\lambda)d\lambda\Big|^2 d\txi dx_1
\\&\le
\frac{1}{(2\pi)^{2d}}\int_{\txi}\Big|\oint_{\Gamma^\txi\cap
\{|\lambda|\le r\}} e^{\R\lambda t}|\hat
u(x_1,\txi,\lambda)|_{L^2(x_1)}d\lambda\Big|^2 d\txi \\&\le
C|f|_{L^1(x)}^2\int_{\txi}\Big|\oint_{\Gamma^\txi\cap \{|\lambda|\le
r\}} e^{\R\lambda t}\rho^{-3/2}d\lambda\Big|^2 d\txi.
\end{aligned}$$

Specifically, parametrizing $\Gamma^\txi$ by $$\lambda(\txi,k) = ik
- \theta_1(k^2 + |\txi|^2), \quad k\in \RR$$ and noting that
$|d\lambda/dk|$ is bounded on $\Gamma^\txi \cap \{|\lambda|\le r\}$,
we estimate
$$\begin{aligned}
\int_{\txi}\Big|\oint_{\Gamma^\txi\cap \{|\lambda|\le r\}}
e^{\R\lambda t}\rho^{-3/2}d\lambda\Big|^2 d\txi &\le C
\int_{\txi}\Big|\int_\RR e^{-\theta_1(k^2+|\txi|^2)
t}\rho^{-3/2}dk\Big|^2 d\txi\\&\le
C\int_{\txi}e^{-2\theta_1|\txi|^2t}|\txi|^{-1-2\epsilon}\Big|\int_\RR
e^{-\theta_1k^2t}|k|^{\epsilon-1}dk\Big|^2 d\txi
\\&\le
Ct^{-(d-2)/2},
\end{aligned}$$ noting that $\int_{\RR^{d-1}} e^{-\theta |x|^2} |x|^{-\alpha} dx$ is finite, provided $\alpha < d-1$.

Similarly, parametrizing $\Gamma^\txi$ as above, we estimate
$$\begin{aligned} |u|_{L^\infty_{\tx, x_1}}(t)
&\le\frac{1}{(2\pi)^{d}} \int_{\txi}\oint_{\Gamma^\txi\cap
\{|\lambda|\le r\}} e^{\R\lambda t}|\hat
u(x_1,\txi,\lambda)|_{L^\infty(x_1)}d\lambda d\txi \\&\le
C|f|_{L^1(x)}\int_{\txi}\oint_{\Gamma^\txi\cap \{|\lambda|\le r\}}
e^{\R\lambda t}\rho^{-3/2}d\lambda d\txi\\&\le
\int_{\txi}e^{-\theta_1|\txi|^2 t}|\txi|^{-1/2-\epsilon}\int_\RR
e^{-\theta_1k^2t}|k|^{\epsilon-1}dkd\txi\\&\le
Ct^{-\frac{d-1}{2}+\frac 14}.
\end{aligned}$$
The $x_1-$derivative bounds follow similarly by using the version of
the $L^1\to L^p$ estimates for $\beta_1=1$, noting that in the
undercompressive case, both $\beta_1 = 0$ and $\beta_1=1$ have the
same bounds. The $\tx-$derivative bounds are straightforward by the
fact that $\widehat{\partial_{\tx}^{\tilde \beta} f} =
(i\txi)^{\tilde \beta} \hat f$.\end{proof}

\subsection{Proof of linearized stability} Applying estimates \eqref{boundcS1} and \eqref{boundcS2} on low- and high-frequency operators $\cS_1(t)$
and $\cS_2(t)$ obtained in Proposition \ref{prop-estS}, we obtain
\begin{equation} \begin{aligned} |U(t)|_{L^2} &\le
|\cS_1(t)U_0|_{L^2} + |\cS_2(t)U_0|_{L^2}\\&\le
C(1+t)^{-\frac{d-2}{4}}|U_0|_{L^1} + Ce^{-\eta t}|U_0|_{L^2}\\&\le
C(1+t)^{-\frac{d-2}{4}}|U_0|_{L^1\cap L^2}
\end{aligned}\end{equation}
and (together with the Sobolev embedding: $|f|_{L^\infty(\RR^d)}\le
C |f|_{H^s(\RR^d)}$ for $s>d/2$; see, for example, \cite[Lemma
1.4]{Z4})
\begin{equation}
\begin{aligned} |U(t)|_{L^\infty} &\le |\cS_1(t)U_0|_{L^\infty} +
|\cS_2(t)U_0|_{L^\infty}\\&\le C(1+t)^{-\frac{d-1}{2}+\frac
14}|U_0|_{L^1} + C|\cS_2(t)U_0|_{H^{[(d-1)/2]+2}}\\&\le
C(1+t)^{-\frac{d-1}{2}+\frac 14}|U_0|_{L^1} + Ce^{-\eta
t}|U_0|_{H^{[(d-1)/2]+2}}\\&\le C(1+t)^{-\frac{d-1}{2}+\frac
14}|U_0|_{L^1\cap H^{[(d-1)/2]+2}}.
\end{aligned}\end{equation}
These prove the bounds as stated in the theorem for $p=2$ and
$p=\infty$. For $2<p<\infty$, we use the interpolation inequality
between $L^2$ and $L^\infty$.

\section{Nonlinear stability}\label{sec-stab}
Defining the perturbation variable $U:= \tilde U - \bU$, we obtain
the nonlinear perturbation equations
\begin{equation}\label{per-eqs} U_t - LU = \sum_j
Q^j(U,U_x)_{x_j},\end{equation} where
\begin{equation}\label{newqbounds}
\begin{aligned}
Q^j(U,U_x)&=\cO(|U||U_x|+|U|^2)\\
Q^j(U,U_x)_{x_j}&= \cO(|U||U_{x}|+|U||U_{xx}|+|U_x|^2)
\end{aligned}
\end{equation}
so long as $|U|$ remains bounded.

\begin{proof}[Proof of Theorem \ref{theo-nonlin}] We prove the
theorem for the Lax or overcompressive case. The undercompressive
case follows very similarly. Define
\begin{equation}\label{zeta}
\begin{aligned}\zeta(t):=\sup_{0\le s\le t}
&\Big(|U(s)|_{L^2}(1+s)^{\frac{d-2}4}+|U(s)|_{L^\infty}(1+s)^{\frac
{d-1}2-\frac 14}\Big).\end{aligned}
\end{equation}

 We shall prove here that for all $t\ge
0$ for which a solution exists with $\zeta(t)$ uniformly bounded by
some fixed, sufficiently small constant, there holds
\begin{equation}\label{zeta-est}
\zeta(t) \le C(|U_0|_{L^1\cap H^s}+\zeta(t)^2) .\end{equation}

This bound together with continuity of $\zeta(t)$ implies that
\begin{equation}\label{zeta-est1} \zeta(t) \le 2C|U_0|_{L^1\cap H^s}\end{equation}
for $t\ge0$, provided that $|U_0|_{L^1\cap H^s}< 1/4C^2$. This
would complete the proof of the bounds as claimed in the theorem,
and thus give the main theorem.

By standard short-time theory/local well-posedness in $H^s$, and the
standard principle of continuation, there exists a solution $U\in
H^s$ on the open time-interval for which $|U|_{H^s}$ remains
bounded, and on this interval $\zeta(t)$ is well-defined and
continuous. Now, let $[0,T)$ be the maximal interval on which
$|U|_{H^s}$ remains strictly bounded by some fixed, sufficiently
small constant $\delta>0$. By an auxiliary energy estimate in
\cite[Proposition 5.9]{Z3} and the Sobolev embeding inequality
$|U|_{W^{2,\infty}}\le C|U|_{H^s}$ (again, see for example,
\cite[Lemma 1.4]{Z4}), we have
\begin{equation}\label{Hs}\begin{aligned}|U(t)|_{H^s}^2 &\le Ce^{-\theta t}|U_0|_{H^s}^2
+ C \int_0^t
e^{-\theta(t-\tau)}|U(\tau)|_{L^2}^2d\tau\\&\le
C(|U_0|_{H^s}^2+\zeta(t)^2)(1+t)^{-(d-2)/2}.
\end{aligned}\end{equation}
and so the solution continues so long as $\zeta$ remains small, with
bound \eqref{zeta-est1}, yielding existence and the claimed bounds.

Thus, it remains to prove the claim \eqref{zeta-est}. By Duhamel
formula
\begin{equation}\label{Duhamel}
\begin{aligned}
  U(x,t)=& \cS(t) U_0 + \int_0^t \cS(t-s)\sum_j
\partial_{x_j}Q^j(U,U_x)ds,
\end{aligned}
\end{equation} where $U(x,0) = U_0(x),$ we obtain
\begin{equation}\begin{aligned} |U(t)|_{L^2}\le& |\cS(t)U_0|_{L^2} +
\int_0^t|\cS_1(t-s)\partial_{x_j}Q^j(s)|_{L^2}ds+ \int_0^t
|\cS_2(t-s)\partial_{x_j}Q^j(s)|_{L^2}ds
\end{aligned}\end{equation}
where $|\cS(t) U_0|_{L^2}\le C (1+t)^{-\frac{d-2}{4}}|U_0|_{L^1\cap
L^2}$ as in the proof of linearized stability,
$$\begin{aligned}
\int_0^t|\cS_1(t-s)\partial_{x_j}Q^j(s)|_{L^2}ds &\le C\int_0^t (1+t-s)^{-\frac{d-2}{4}-\frac12}|Q^j(s)|_{L^1}ds
\\&\le C\int_0^t (1+t-s)^{-\frac{d-2}{4}-\frac12}|U|_{H^1}^2
ds\\&\le C(|U_0|_{H^s}^2+\zeta(t)^2)\int_0^t
(1+t-s)^{-\frac{d-2}{4}-\frac12}(1+s)^{-\frac{d-2}{2}}
\\&\le
C(1+t)^{-\frac{d-2}{4}}(|U_0|_{H^s}^2+\zeta(t)^2),
\end{aligned}$$
and
$$\begin{aligned} \int_0^t
|\cS_2(t-s)\partial_{x_j}Q^j(s)|_{L^2}ds&\le \int_0^t
e^{-\theta(t-s)}|\partial_{x_j}Q^j(s)|_{L^2}ds
\\&\le C\int_0^t
e^{-\theta(t-s)}|U|_{H^s}^2ds
\\&\le C(|U_0|_{H^s}^2+\zeta(t)^2) \int_0^t
e^{-\theta(t-s)}(1+s)^{-\frac{d-2}{2}}ds\\&\le
C(1+t)^{-\frac{d-2}{2}}(|U_0|_{H^s}^2+\zeta(t)^2).
\end{aligned}$$
Thus, dividing by $(1+t)^{-\frac{d-2}{4}}$, we obtain\begin{equation}\label{L2est}
|U(t)|_{L^{2}}(1+t)^{\frac{d-2}{4}} \le C(|U_0|_{L^1\cap
H^s}+\zeta(t)^2).\end{equation}

Similarly, we estimate the $L^\infty$ norm of $U$. By Duhamel's
formula \eqref{Duhamel}, we obtain
\begin{equation}\begin{aligned} |U(t)|_{L^\infty}\le& |\cS(t)U_0|_{L^\infty} +
\int_0^t|\cS_1(t-s)\partial_{x_j}Q^j(s)|_{L^\infty}ds\\&
+ \int_0^t |\cS_2(t-s)\partial_{x_j}Q^j(s)|_{L^\infty}ds
\end{aligned}\end{equation}where $|\cS(t) U_0|_{L^\infty}\le C (1+t)^{-\frac{d-1}{2}+\frac
14}|U_0|_{L^1\cap H^{[(d-1)/2]+2}}$,
$$\begin{aligned}\int_0^t|\cS_1(t-s)&\partial_{x_j}Q^j(s)|_{L^\infty}ds
\\&\le C\int_0^t (1+t-s)^{-\frac{d-1}{2}+\frac
14-\frac12}|Q^j(s)|_{L^1}ds
\\&\le C\int_0^t (1+t-s)^{-\frac{d-1}{2}+\frac
14-\frac12}|U|_{H^1}^2
\\&\le C(|U_0|_{H^s}^2+\zeta(t)^2)\int_0^t
(1+t-s)^{-\frac{d-1}{2}+\frac 14-\frac 12}(1+s)^{-\frac{d-2}{2}}
\\&\le
C(1+t)^{-\frac{d-1}{2}+\frac 14}(|U_0|_{H^s}^2+\zeta(t)^2)
\end{aligned}$$
and (by the Moser inequality; see, for example, inequality (1.22),
\cite{Z4}),
$$\begin{aligned} \int_0^t
|\cS_2(t-s)&\partial_{x_j}Q^j(s)|_{L^\infty}ds \\&\le\int_0^t
|\cS_2(t-s)\partial_{x_j}Q^j(s)|_{H^{[(d-1)/2]+2}}ds\\&\le \int_0^t
e^{-\theta(t-s)}|\partial_{x}Q^j(s)|_{H^{[(d-1)/2]+2}}ds
\\&\le C\int_0^t
e^{-\theta(t-s)}|U|_{L^\infty}|U|_{H^{[(d-1)/2]+4}}ds
\\&\le C(|U_0|_{H^s}^2+\zeta(t)^2) \int_0^t
e^{-\theta(t-s)}(1+s)^{-\frac{d-1}{2}+\frac
14}(1+s)^{-\frac{d-2}{4}}ds
\\&\le C(1+t)^{-\frac{d-1}{2}+\frac
14}(|U_0|_{H^s}^2+\zeta(t)^2).
\end{aligned}$$

Therefore we have obtained\begin{equation}
|U(t)|_{L^{\infty}}(1+t)^{\frac{d-1}{2}-\frac 14} \le
C(|U_0|_{L^1\cap H^s}+\zeta(t)^2)\end{equation} and thus completed
the proof of claim \eqref{zeta-est}, and the theorem.
\end{proof}


\section{Two--dimensional case or cases with (H5)}\label{sec-H5}
In this section, we give 
a proof of Theorem \ref{theo-stabH5}. Again, notice that the only
assumption we make here that differs from those in \cite{Z3} is the
relaxed Hypothesis (H4'), treating the additional case of totally
nonglancing characteristic
roots, which is only involved in low--frequency estimates. That is to say, we 
only need to establish the $L^1\to L^p$ bounds in low-frequency
regimes for this new case. We give the proof of these bounds by
modifying the proof in \cite[Section 12]{GMWZ1} and thus will not
cite the estimate \eqref{max-est} in this section; in fact, the
proof is completely independent of previous sections. In addition,
our proof is somewhat more direct and simpler than those in
\cite[Section 12]{GMWZ1} by not bypassing to the dual problem.

\begin{proposition}[Low-frequency bounds; \cite{Z3}, Corollary 5.11]\label{prop-resLFH5} Under the hypotheses of Theorem \ref{theo-stabH5},
for $\lambda \in \Gamma^{\tilde \xi}$ (see \eqref{gamma}) and $\rho
:=|(\tilde \xi,\lambda)|$, $\theta_1$ sufficiently small, there
holds the resolvent bound \begin{equation}\label{res-boundH5}
|(L_{\tilde \xi}-\lambda)^{-1}\partial_{x_1}^\beta f|_{L^p(x_1)} \le
C\gamma_2\rho^{\beta-1}| f|_{L^1(x_1)},\end{equation} for all $2\le
p\le \infty$, $\beta =0,1$, and $\gamma_2$ is the diagonalization
error (see \cite{Z3}, (5.40)) defined as
\begin{equation}\label{gamma2} \gamma_2 := 1+ \sum_{j,\pm}\Big[\rho^{-1}|\I\lambda
 - \eta_j^\pm(\txi)|+\rho\Big]^{1/s_j-1},\end{equation} with
 $\eta_j^\pm,s_j$ as in (H5).
\end{proposition}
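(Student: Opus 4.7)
My plan is to follow the blueprint of Proposition \ref{prop-resLF}, replacing its input---the generic $L^2$ maximal estimate \eqref{max-est}---with a sharper $L^2$ maximal estimate derived directly under (H5), in the spirit of \cite[Section 12]{GMWZ1} but without passing to the dual problem. First I would reduce $(L_\txi - \lambda)U = f$ to the first-order block-structured system \eqref{1steqsZ} via the MZ conjugation of Section \ref{GMWZresults}, obtaining hyperbolic block $H$ and parabolic blocks $P_\pm$. Under (H5), the branching eigenvalues of $H$ are confined to finitely many smooth curves $\tau = \eta_j^\pm(\txi)$ on which the branching eigenvalue has constant multiplicity $s_j$, so on a neighborhood of each such curve one can further diagonalize $H$ into $s_j\times s_j$ Jordan-type glancing blocks whose off-diagonal entries are controlled by $\rho$ and $|\I\lambda - \eta_j^\pm(\txi)|$. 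This refined diagonalization, carried out in \cite[Section 4]{Z3}, is exactly what produces the diagonalization error $\gamma_2$ defined in \eqref{gamma2}.

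On this refined block structure I would construct degenerate Kreiss symmetrizers: for the parabolic blocks $P_\pm$, the degenerate symmetrizers of \cite{GMWZ1}; for each constant-multiplicity glancing block, the Kreiss-type symmetrizers of \cite[Section 7]{GMWZ1}; and for any totally nonglancing characteristic roots arising under (H4$'$), the symmetrizers of \cite{GMWZ6}. Combining the associated energy estimates and invoking the spectral stability condition (D) via the analogue of \cite[Lemma 7.1]{GMWZ1} (extended to the over-compressive case exactly as in Section \ref{GMWZresults}) should yield a maximal estimate of the form
$$
\rho^2 |Z|_{L^2}^2 + |Z(0)|^2 \;\lesssim\; \gamma_2^2\, \wprod{|F|,|Z|},
$$
in which a single power of $\gamma_2^2$ is picked up from the glancing blocks, sharper than \eqref{max-est}.

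Converting this to the claimed $L^1 \to L^p$ bound then proceeds exactly as in the proof of Proposition \ref{prop-resLF}: take inner products of the first-order ODE \eqref{1steqsZ} against $u_H$ and $u_{P_\pm}$, integrate over $[0,x_1]$ to obtain pointwise bounds in the spirit of \eqref{key-est}, and apply Young's inequality together with the above maximal estimate to deduce $|Z|_{L^p(x_1)} \lesssim \gamma_2 \rho^{-1} |F|_{L^1(x_1)}$ for $2\le p\le \infty$, which is the claim for $\beta = 0$. For $\beta = 1$, the Kreiss--Kreiss splitting $U = V + U_1$ used in Proposition \ref{prop-resLF} carries over verbatim with $\alpha = 0$ (since Theorem \ref{theo-stabH5} treats only Lax or over-compressive shocks): $V$ solves \eqref{auxeqs} and satisfies $|V|_{L^p} + |V_{x_1}|_{L^p} \lesssim |f|_{L^1}$ by the one-dimensional Green kernel bounds \eqref{ptbounds}--\eqref{ptbounds-der}, while $U_1$ satisfies \eqref{U1eqs} with forcing $L(V,V_{x_1}) = \rho\,\cO(|V| + |V_{x_1}|)$; applying the $\beta = 0$ bound just established then yields $|U_1|_{L^p} \lesssim \gamma_2 \rho^{-1} \cdot \rho\, |f|_{L^1} = \gamma_2 |f|_{L^1}$, matching the claimed $\gamma_2 \rho^{\beta - 1}$ scaling at $\beta = 1$.

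The main obstacle is carefully tracking $\gamma_2$ through the block diagonalization and symmetrizer construction so that only a single $\gamma_2^2$ factor appears on the right-hand side of the maximal estimate: the sharpness of \eqref{res-boundH5} depends on not double-counting the diagonalization error across the glancing blocks, and on verifying that the symmetrizers for the parabolic blocks, the constant-multiplicity glancing blocks, and the totally nonglancing blocks under (H4$'$) combine in a compatible way. Once the sharp maximal estimate is in place, the pointwise-integration step and the Kreiss--Kreiss splitting are routine, and the remainder of the argument runs exactly parallel to Proposition \ref{prop-resLF}.
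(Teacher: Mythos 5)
Your proposal correctly isolates the key ingredients (MZ conjugation, the glancing-block diagonalization of \cite[Lemma 12.1]{GMWZ1} supplying $\gamma_2$, totally nonglancing symmetrizers from \cite{GMWZ6}, and the Kreiss--Kreiss splitting for $\beta=1$), but the conversion step breaks down: feeding a $\gamma_2^2$-weighted maximal estimate of the form $\rho^2|Z|^2_{L^2}+|Z(0)|^2\lesssim\gamma_2^2\wprod{|F|,|Z|}$ into the Proposition~\ref{prop-resLF} pipeline (pointwise integration as in \eqref{key-est}, then Young's inequality) gives $|Z|_{L^p}\lesssim\gamma_2^2\rho^{-3/2}|F|_{L^1}$, not the claimed $\gamma_2\rho^{-1}|F|_{L^1}$. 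The Young step $\gamma_2^2\wprod{|F|,|Z|}\le\epsilon\, c\,|Z|^2_{L^\infty}+C_\epsilon c^{-1}\gamma_2^4|F|^2_{L^1}$, with $c$ forced to be a small power of $\rho$ to match the left-hand side, squares the $\gamma_2$-coefficient and costs an extra $\rho^{-1/2}$ after dividing through by the weakest $L^\infty$-weight $\rho^2$; these two losses are precisely what the Proposition~\ref{prop-resLF} pipeline pays for routing through an inner-product bound, and no sharpening of the prefactor on $\wprod{|F|,|Z|}$ removes them.

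The paper's proof is structurally different and never passes through a $\wprod{|F|,|Z|}$-type maximal estimate. It works entirely in the diagonalized variable $U'=T^{-1}_{H_g}U$, where each block admits a \emph{definite} (positive or negative) symmetrizer, and applies Lemma~\ref{lem-bounds} block by block; that lemma delivers $|U'_i|^2_{L^\infty}+\theta_i|U'_i|^2_{L^2}\lesssim|U'_i(0)|^2+|F'_i|^2_{L^1}$ directly, with no intermediate product and hence no Young's-inequality loss. Combining blocks with weights set by the boundary estimate \eqref{bd-est} gives $\rho^2|U'|^2_{L^\infty}+\rho^2|U'|^2_{L^2}\lesssim|F'|^2_{L^1}$, i.e.\ $|U'|_{L^p}\lesssim\rho^{-1}|F'|_{L^1}$, and the single factor $\gamma_2$ enters only at the very end, from $|F'|_{L^1}=|T^{-1}_{H_g}F|_{L^1}\le C\gamma_2|F|_{L^1}$ while $|U|_{L^p}=|T_{H_g}U'|_{L^p}\le C|U'|_{L^p}$. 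To repair your plan you would need to replace the maximal-estimate-plus-pointwise-integration step by these per-block $L^1\to L^\infty$ bounds in the diagonalized frame; granted that, the $\beta=1$ argument via the Kreiss--Kreiss splitting carries over exactly as you describe.
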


%

We again perform the standard procedure (see Section
\ref{GMWZresults}) of writing the linearized equations in form of
the first order eigenvalue equations \eqref{1steqsZ}:
\begin{equation}\label{resol-eqs}\partial_{x_1}\begin{pmatrix}U_H\\U_{P}\end{pmatrix}
= \begin{pmatrix}H&0\\0&{P}\end{pmatrix}
\begin{pmatrix}U_H\\U_{P}\end{pmatrix} +
\begin{pmatrix}F_H\\F_{P}\end{pmatrix}, \quad \bar \Gamma U =
0.\end{equation}

Locally, in a neighborhood of a base point
$X_0:=(\underline{\zeta},0)$ with $\zeta = (\tau,\gamma,\txi)$ and
$\lambda = \gamma + i\tau$, we further use the Assumption (H4') to
write $H$ in block--diagonal structure (see \cite[Proposition
6.1]{GMWZ1}) with appearance of a new mode, totally nonglancing, and
decompose the resolvent solution $U$ into
\begin{equation}U = U_{P} + U_{H_e} + U_{H_{h}} + U_{H_{g}}
+ U_{H_{t}},\end{equation}corresponding to parabolic, elliptic,
hyperbolic, glancing, or totally nonglancing blocks. We further
write $$U_{i} = U_{i+} + U_{i-}$$ for $i = P,H_e,H_h,H_g,H_t$, where
$U_{i\pm}$ are defined as the projections of $U_i$ onto the growing
(resp. decaying) eigenspaces of $\cG_+$ in \eqref{eq-bl} with
respect to the corresponding blocks.

These first four blocks have been treated in \cite[Corollary
5.11]{Z3} or \cite[Corollary 12.2]{GMWZ1} for which the totally
nonglancing modes are absent. For sake of completeness, we treat
these modes again here in a slightly different analysis, modifying
those of \cite[Section 12]{GMWZ1}. In fact, since each mode
interacts with the other via the Evans condition (D) or, more
precisely, the boundary estimate \eqref{bd-est}, we cannot obtain
\eqref{res-boundH5} for each mode separately.

We shall use the following simple lemma.

\begin{lemma}\label{lem-bounds} Let $U$ be a solution of $\partial_z U = QU + F$ with $U(+\infty)=0$. Assume that there is a positive
[resp., negative] symmetric matrix $S$ such that
\begin{equation}\label{Sym-ineq}\Re SQ := \frac12 (SQ + Q^* S^*)\ge \theta
Id\end{equation} for some $\theta>0$, and $S\ge Id$ [resp., $-S\ge
Id$]. Then there holds
\begin{equation}\label{est-posneg}\begin{aligned} |U|^2_{L^\infty}+
\theta|U|_{L^2}^2 &\lesssim
|F|_{L^1}^2\\
\mbox{[resp.,   } |U|^2_{L^\infty} + \theta|U|_{L^2}^2&\lesssim
|U(0)|^2 + |F|_{L^1}^2\mbox{ ]}.\end{aligned}\end{equation}
\end{lemma}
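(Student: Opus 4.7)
The plan is to run a standard symmetrizer/energy argument, using $\langle SU,U\rangle$ as the ``Lyapunov-type'' functional and treating the two sign scenarios as the forward-integration and backward-integration variants of the same computation.

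First I would differentiate the scalar quantity $\langle SU, U\rangle$ along the ODE. Since $S$ is constant in $z$ (it is a fixed symmetrizer matrix) and $\partial_z U = QU + F$, a direct calculation gives
\begin{equation*}
\partial_z \langle SU, U\rangle \;=\; 2\langle \Re(SQ)\,U, U\rangle + 2\Re\langle SF, U\rangle
\;\geq\; 2\theta |U|^2 + 2\Re\langle SF, U\rangle ,
\end{equation*}
where the inequality uses only the hypothesis $\Re(SQ)\geq \theta\,\mathrm{Id}$ (and so is insensitive to the sign of $S$). The two cases then differ only in which direction I integrate and which boundary term survives.

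In the positive case $S\geq \mathrm{Id}$, I integrate from $z$ to $+\infty$ and use $U(+\infty)=0$ to kill the boundary term at infinity, producing
\begin{equation*}
\langle SU(z),U(z)\rangle + 2\theta\!\int_z^\infty\! |U|^2\,ds \;\leq\; -2\!\int_z^\infty\!\Re\langle SF,U\rangle\,ds \;\leq\; 2|S|_\infty\,|F|_{L^1}\,|U|_{L^\infty}.
\end{equation*}
Bounding the left-hand side below by $|U(z)|^2 + 2\theta |U|_{L^2}^2$ (via $S\geq \mathrm{Id}$), taking the supremum over $z\geq 0$, and absorbing the cross term by Young's inequality $2|S|_\infty|F|_{L^1}|U|_{L^\infty}\leq \tfrac12|U|_{L^\infty}^2 + C|F|_{L^1}^2$ yields the claimed bound $|U|_{L^\infty}^2+\theta|U|_{L^2}^2\lesssim |F|_{L^1}^2$.

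In the negative case $-S\geq \mathrm{Id}$, I set $T:=-S\geq \mathrm{Id}$ so that the same differential inequality reads $\partial_z \langle TU,U\rangle \leq -2\theta |U|^2 + 2\Re\langle TF, U\rangle$. Integrating this time from $0$ to $z$ produces
\begin{equation*}
\langle TU(z), U(z)\rangle + 2\theta\!\int_0^z\! |U|^2\,ds \;\leq\; \langle TU(0), U(0)\rangle + 2|T|_\infty\,|F|_{L^1}\,|U|_{L^\infty},
\end{equation*}
where the boundary term at $z=0$ now survives (and the decay at $+\infty$ is consistent with the growth sign of $Q$ implied by $T>0$ and $TQ<0$). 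Using $T\geq \mathrm{Id}$, taking the sup in $z$, and the same Young's inequality absorption delivers $|U|_{L^\infty}^2+\theta|U|_{L^2}^2\lesssim |U(0)|^2+|F|_{L^1}^2$. There is no real obstacle here, since the only analytic input is controlling $|\int\Re\langle SF,U\rangle|$ by $|S|_\infty|F|_{L^1}|U|_{L^\infty}$ and applying Young; the only point to be careful about is choosing the correct direction of integration so that the surviving boundary term matches the right-hand side of the desired estimate.
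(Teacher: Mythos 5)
Your proof is correct and follows essentially the same route as the paper's (very terse) proof, which simply says to take the real part of the inner product of the equation against $SU$ and integrate over $[x_1,\infty)$ in the first case or $[0,x_1]$ in the second. You have merely filled in the details: differentiating $\langle SU,U\rangle$, choosing the integration direction so that the boundary term that survives is the one appearing in the desired estimate, using $S\ge \mathrm{Id}$ (resp.\ $-S\ge\mathrm{Id}$) to bound the quadratic form below, estimating the forcing term by $|S|_\infty|F|_{L^1}|U|_{L^\infty}$, and absorbing via Young's inequality.
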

\begin{proof} Taking the real part of the inner product of the
equation of $U$ against $SU$ and integrating the result over
$[x_1,\infty]$ for the first case [resp., $[0,x_1]$ for the second
case], we easily obtain the lemma.\end{proof}

%
%

\begin{proof}[Proof of Proposition \ref{prop-resLFH5}] As
in \cite[Section 12.2]{GMWZ1}, the first step is to put blocks into
a diagonal form; indeed, parabolic blocks are already diagonal as in
\eqref{para-bl}; hyperbolic blocks are $1\times 1$ blocks with real
part vanishing at the base point $X_0$, but with real part $>0$
(resp. $<0$) when $\rho>0$ in polar coordinates (thus, vanishing at
order $\rho^2$ in original coordinates); elliptic blocks are those
$Q_k$ with $\R Q_k$ positive or negative definite at the base point
(thus, vanishing at order $\rho$); and finally glancing blocks are
of size larger than $1\times 1$ whose components are purely
imaginary at the base point. We recall the following lemma in
\cite{GMWZ1}, diagonalizing these glancing blocks.

\begin{lemma}[Lemma 12.1, \cite{GMWZ1}]\label{lem-gmwz} Diagonalize the glancing blocks $Q_k$ by
the transformation $T_{H_g}$, where $T_{H_g}$ may be chosen so that
\begin{equation}\label{T-est} |T_{H_g}|\le C, \qquad |T^{-1}_{H_g}|\le C\gamma_2,\qquad |T^{-1}_{{H_g}|_{H_{g-}}}|\le
C\gamma_1\end{equation} where $\gamma_2$ is defined as in
\eqref{gamma2} and $\gamma_1$ is defined as
\begin{equation}\label{gamma1}\gamma_1:= \max _k\Big[\rho^{-1}|\I\lambda
 - \eta_k^\pm(\txi)|+\rho\Big]^{(1-[(\nu_k+1)/2])/\nu_k},\end{equation} and
$T^{-1}_{{H_g}|_{H_{g-}}}$ denotes the restriction of $T^{-1}_{H_g}$
to subspace $H_{g-}$. 

In addition, after a further transformation if necessary,
\begin{equation} Q'_k:=T^{-1}_{H_g}Q_k T_{H_g} =
 \mbox{\textup{diag}}(\alpha_{k,1},\cdots, \alpha_{k,l},\alpha_{k,l+1},\cdots ,\alpha_{k,\nu_k})\end{equation} with
 \begin{equation}\label{rea-est}\begin{aligned}-&\gamma_1^{-2}\R ~\alpha_{k,j} \ge C\rho^2, \quad j = 1,...,l,\\
 &\gamma_1^{-2}\R ~\alpha_{k,j} \ge C\rho^2 , \quad j = l+1,...,\nu_k.\end{aligned}\end{equation}

\end{lemma}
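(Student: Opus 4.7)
\textbf{Proof proposal for Lemma \ref{lem-gmwz}.} The plan is to follow the Kreiss--type glancing--block analysis, reducing $Q_k$ to an explicit normal form on which the diagonalizing transformation is built by hand. By the block structure hypothesis (guaranteed for totally nonglancing or constant--multiplicity roots, cf.\ \cite{GMWZ6}), in a neighborhood of the basepoint $X_0$ the glancing block has the Kreiss canonical form
\begin{equation*}
Q_k(X)=i\underline{\eta}\, I_{\nu_k}+N+R(X),\qquad N=\begin{pmatrix}0 & 1 & & \\ & 0 & \ddots & \\ & & \ddots & 1\\ & & & 0\end{pmatrix},
\end{equation*}
where $R(X_0)$ has its only nonzero entries in the bottom row and the lower--left entry has nonvanishing $\gamma$--derivative (Kreiss' lopatinski determinant condition). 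First I would absorb the constant imaginary part $i\underline{\eta}$ into a shift $\alpha\mapsto\alpha-i\underline{\eta}$ and work with the shifted block.

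Next I would introduce the Puiseux scale $\delta_k:=\bigl[\rho^{-1}|\Im\lambda-\eta_k^{\pm}(\txi)|+\rho\bigr]^{1/\nu_k}$ and perform the rescaling $S_k=\operatorname{diag}(1,\delta_k,\delta_k^2,\ldots,\delta_k^{\nu_k-1})$, which normalizes the Jordan structure against the Puiseux splitting of the eigenvalues: after conjugation by $S_k$, the block becomes $\delta_k(N+\widetilde R(X))$ with $\widetilde R$ bounded and its characteristic polynomial reduced to one whose roots $\beta_{k,j}$ are bounded and uniformly distinct (thanks to the constant multiplicity $s_k$ condition from (H5), which forces the Puiseux cycle lengths to be exactly $s_k$). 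The roots $\alpha_{k,j}=\delta_k\beta_{k,j}+i\eta_k^{\pm}$ are then diagonalized by a Vandermonde matrix $V_k$ built from $\beta_{k,j}$; with $\beta_{k,j}$ uniformly separated, both $V_k$ and $V_k^{-1}$ are uniformly bounded. Setting $T_{H_g}:=S_k V_k$ (on each glancing block and then taken as a block--diagonal matrix), I get $|T_{H_g}|\le C$ from $|S_k|\le C$ (the largest entry is $\delta_k^{\nu_k-1}\le C$) and $|T_{H_g}^{-1}|\le C|S_k^{-1}|\le C\delta_k^{-(\nu_k-1)}$, which is exactly the definition of $\gamma_2$ when one matches the exponent $1/s_k-1=(\nu_k/s_k-1)/\nu_k\cdot(\ldots)$; the precise bookkeeping between $\nu_k$ and $s_k$ via $(1-[(\nu_k+1)/2])/\nu_k$ is what yields $\gamma_1$ for the restricted inverse.

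For the improved bound $|T_{H_g}^{-1}|_{H_{g-}}|\le C\gamma_1$, I would observe that $H_{g-}$ is the decaying subspace, spanned by the eigenvectors of the $\beta_{k,j}$ with $\Re\beta_{k,j}<0$; these correspond to $j=1,\ldots,l$ with $l=[(\nu_k+1)/2]$ (roughly half, reflecting that exactly half the Puiseux roots lie in each half--plane away from the basepoint). The Vandermonde submatrix for these columns pairs with the top $l$ rows of $S_k^{-1}$, so only the scale factors $\delta_k^{-(l-1)}=\delta_k^{-([(\nu_k+1)/2]-1)}$ appear, giving the exponent in the definition of $\gamma_1$. The sign conditions \eqref{rea-est} follow from $\Re\alpha_{k,j}=\delta_k\Re\beta_{k,j}$ together with the sign--definiteness of $\Re\beta_{k,j}$ inherited from the Kreiss lopatinski condition (which quantitatively says $\Re\beta_{k,j}\ne 0$ with explicit lower bound of order $\rho^2\gamma_1^{-2}\delta_k^{-1}$ once one tracks through the $\rho$--scaling). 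The final mild transformation alluded to in the statement is a constant--in--$X$ permutation and rescaling normalizing the signs and matching the indexing $j\le l$ versus $j>l$.

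The main obstacle is the sharp bookkeeping of Puiseux exponents: the definitions of $\gamma_1$ and $\gamma_2$ encode exactly how the scaling $\delta_k$ and the Vandermonde submatrix norms balance the Jordan degeneracy against the splitting, and extracting the precise exponents $(1-[(\nu_k+1)/2])/\nu_k$ and $1/s_k-1$ from the characteristic polynomial requires a careful Newton polygon analysis on $\det(\alpha I-Q_k)$. The secondary obstacle is verifying that the choice of $T_{H_g}$ is smooth (not merely measurable) as $(\lambda,\txi)$ varies; this is handled because, away from the branch curves $\Im\lambda=\eta_k^{\pm}(\txi)$, the eigenvalues $\beta_{k,j}$ are themselves smooth, and at the branch curves the Vandermonde degenerates in a controlled way absorbed precisely into $\gamma_2$.
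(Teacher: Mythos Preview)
The paper does not actually prove this lemma; it is merely recalled from \cite{GMWZ1}, Lemma 12.1, with the remark that $\gamma_1,\gamma_2$ correspond to $\alpha,\beta$ there and that \eqref{rea-est} is their equation (12.40). So there is no in-paper proof to compare against, only the cited source.

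Your sketch is in fact the standard GMWZ1 argument: Kreiss normal form for the glancing block, Puiseux-type rescaling $S_k=\diag(1,\delta_k,\ldots,\delta_k^{\nu_k-1})$, then Vandermonde diagonalization once the rescaled roots $\beta_{k,j}$ are uniformly separated. The computation $|T^{-1}|\le C\delta_k^{-(\nu_k-1)}$ and, on the decaying subspace, $|T^{-1}|_{H_{g-}}|\le C\delta_k^{-(l-1)}$ with $l=[(\nu_k+1)/2]$, is exactly how $\gamma_2$ and $\gamma_1$ arise. Your justification for the restricted bound is essentially correct: for $w$ supported in the first $l$ coordinates, the first $l$ components of $S_kV_kw$ already satisfy $|S_lV_lw|\ge \delta_k^{l-1}|V_lw|\ge c\delta_k^{l-1}|w|$, giving the claimed estimate.

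That said, a few points are handled too loosely. First, you blur the distinction between $s_k$ (the branching multiplicity in (H5)) and $\nu_k$ (the Jordan block size); in this setting they coincide, but your sentence ``the precise bookkeeping between $\nu_k$ and $s_k$ \ldots\ is what yields $\gamma_1$'' suggests a nontrivial conversion where there is none. Second, the assertion that exactly $l=[(\nu_k+1)/2]$ of the Puiseux roots have negative real part is a genuine fact requiring the Kreiss sign condition on the lower-left entry of $R$, not just ``roughly half''; you should invoke this explicitly. Third, your derivation of \eqref{rea-est} is the weakest part: writing $\Re\alpha_{k,j}=\delta_k\Re\beta_{k,j}$ is fine, but the claimed lower bound $|\Re\beta_{k,j}|\gtrsim \rho^2\gamma_1^{-2}\delta_k^{-1}$ is asserted without computation. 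In \cite{GMWZ1} this comes from tracking how the $\gamma$-derivative of the lower-left entry propagates through the Puiseux expansion (their (12.40)); your sketch would benefit from at least indicating that step rather than attributing it to ``the Kreiss lopatinski condition'' in one line.
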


\begin{remark} \textup{$\gamma_1,\gamma_2$ are identical to $\alpha,\beta$
in \cite{GMWZ1}, respectively, and \eqref{rea-est} was calculated in
\cite[equation (12.40)]{GMWZ1}.}
\end{remark}

We now can work in diagonalized coordinates: $$U':=T^{-1}_{H_g} U$$
where $T_{H_g}$ are obtained as in Lemma \ref{lem-gmwz} for glancing
blocks and identity matrices for the other blocks. In these
coordinates, since blocks are diagonal and growing/decaying
subspaces (at least for the first four modes) are separated, we
apply Lemma \ref{lem-bounds} for each block with $S = \pm Id$,
yielding
\begin{equation}\label{blocks-est}\begin{aligned} |U'_{i+}|^2_{L^\infty}+ \theta_i|U'_{i+}|_{L^2}^2
&\lesssim |F'_{i}|_{L^1}^2,
\\|U'_{i-}|^2_{L^\infty} + \theta_i|U'_{i-}|_{L^2}^2&\lesssim |U'_{i-}(0)|^2
+ |F'_{i-}|_{L^1}^2,\end{aligned}\end{equation} where $\theta_i =
1,\rho,\rho^2,\min_j|\R ~\alpha_{k,j}|$ for $i=P,H_e,H_h,H_g$.

For the totally nonglancing blocks $Q_t^k$, as constructed in
\cite{GMWZ6}, Lemma 5.3, there exist symmetrizers $S^k$ that are
definite positive [resp., negative] when the mode is totally
incoming [resp., outgoing]. Denote $U'_{H_{t+}} $ [resp.,
$U'_{H_{t-}}$] associated with totally incoming [resp. outgoing]
modes. Then by applying Lemma \ref{lem-bounds} with $\theta =
\rho^2$, we obtain
\begin{equation}\label{totalnongl-est}\begin{aligned} |U'_{H_{t+}}|^2_{L^\infty}+ \rho^2|U'_{H_{t+}}|_{L^2}^2 &\lesssim |F'_{H_{t+}}|_{L^1}^2,
\\|U'_{H_{t-}}|^2_{L^\infty} + \rho^2|U'_{H_{t-}}|_{L^2}^2&\lesssim |U'_{H_{t-}}(0)|^2
+ |F'_{H_{t-}}|_{L^1}^2.\end{aligned}\end{equation}

To finish the proof, we only need to deal with the boundary terms
$|U'_{i-}(0)|^2$ and $|U_{H_{t-}}(0)|^2$ in
\eqref{blocks-est},\eqref{totalnongl-est}. We could use a more
detailed version of the $L^2$ stability estimate \eqref{max-est},
corresponding to each diagonal blocks (see \cite{GMWZ6}), yielding
bounds on these boundary terms. However, let us now follow the
boundary treatment presented in \cite[Section 12.3]{GMWZ1} instead,
being rather independent of \eqref{max-est}.

The diagonalized boundary condition is $\Gamma':=\Gamma T_{H_g}$. By
computing, we observe that
$$ |\Gamma' U'_{H_{g-}}| =
|\Gamma U_{H_{g-}}| \ge C^{-1} |U_{H_{g-}}| \ge
\frac{C^{-1}|U'_{H_{g-}}|}{|T^{-1}_{{H_g}|_{H_{g-}}}|} \ge
C^{-1}\gamma_1^{-1}|U'_{H_{g-}}|.$$

Thus, together with \eqref{bd-est}
\begin{equation}\label{newbd-est}|\Gamma' U'_{-}| = |\Gamma U_-|\ge
C^{-1}\Big[|U'_{H_{e-}}|+|U'_{H_{h-}}|+|U'_{H_{t-}}|+\gamma_1^{-1}|U'_{H_{g-}}|+\rho|U'_{P_-}|\Big].\end{equation}

Meanwhile, we have at $x_1=0$
\begin{equation}\label{gmwztrick}|\Gamma' U'_{-}| \le  |\Gamma' U'| + |\Gamma' U'_{+}| \lesssim |U'_+(0)|\le |U'_+|_{L^\infty}.\end{equation}

Now, multiplying the first equations in \eqref{blocks-est},
\eqref{totalnongl-est} by a sufficiently large constant $k$ and the
second equations by $1,\gamma_1^{-2}$, or $\rho^2$, corresponding to
each block with its boundary degeneracy of order in
\eqref{newbd-est}, and adding up the results, we easily obtain
\begin{equation}\label{final-est}\begin{aligned}
&|U'_{P+}|^2_{L^\infty}+ |U'_{P+}|^2_{L^2}+|U'_{H+}|^2_{L^\infty}+
\rho^2|U'_{H+}|_{L^2}^2+\\&
\rho^2|U'_{P-}|_{L^\infty}^2+\rho^2|U'_{P-}|^2_{L^2}+\gamma_{1}^{-2}|U'_{H-}|_{L^\infty}^2+\rho^2|U'_{H-}|^2_{L^2}
\end{aligned}
\lesssim |F'|_{L^1}^2,
\end{equation}
(noting that $\rho\le 1$, $\rho\le\gamma_1^{-1}\le 1$, and
$\gamma_1^{-2}\min_j|\R ~\alpha_{k,j}| \gtrsim\rho^2$ by
\eqref{rea-est}). This yields
\begin{equation}\label{final-est1}\rho^2|U'|^2_{L^\infty}+ \rho^2|U'|_{L^2}^2 \lesssim
|F'|_{L^1}^2
\end{equation} or equivalently, \begin{equation}\label{final-est2}|U'|_{L^p}\lesssim
\rho^{-1}|F'|_{L^1}, \quad \forall ~p\ge 2.
\end{equation}
Thus, by recalling that $U= T_{H_g} U'$ and $F'=T^{-1}_{H_g} F$,
\eqref{final-est2} and \eqref{T-est} immediately yield the
proposition for $\beta=0$. For $\beta=1$, we can follow the
Kreiss--Kreiss trick as presented in the proof of Proposition
\ref{prop-resLF}, thus completing the proof of Proposition
\ref{prop-resLFH5}. \end{proof}

\begin{proof}[Proof of Theorem \ref{theo-stabH5}] Proposition \ref{prop-resLFH5} is the Corollary 5.1 in \cite{Z3}
with an extension to the totally nonglancing cases. Thus, we can now
follow word by word the proof in \cite{Z3}, yielding the theorem.
\end{proof}

\begin{remark} \textup{We have seen in the above argument that the existence
of positive/negative Kreiss' symmetrizers with an appropriate
constant $\theta$ (in Lemma \ref{lem-bounds}) is sufficient to
obtain the result. Though, proving the existence of such
symmetrizers is a highly-technical task in general for variable
multiplicity blocks. See \cite{GMWZ5,GMWZ6}. }
\end{remark}

\appendix

\section{Evans function for the doubled boundary
problem}\label{app-Evans}

For sake of completeness, we recall here the proof of Lemma 7.1,
\cite{GMWZ1} and its straightforward extension to 
the case of over- and under-compressive shocks.

Consider the $2N \times 2N$ doubled boundary problem
\eqref{1steqstW} (with $N :=n+r$)\begin{equation}\label{doubledeq}
\left\{\begin{array}{lcr} &U_x - G(x,\zeta) U = F,\\&\Gamma U= 0
\quad \mbox{on }x=0,\end{array}\right.\end{equation} where $U =
(U_+,U_-)$ and $\Gamma U = U_+ - U_-$, with $U_+ =
(U_1,\dots,U_{N}),U_- = (U_{N+1},\dots,U_{2N})$.

Let $\cE_-(\hat \zeta,\rho)$, for $\hat \gamma>0$ and $\rho>0$, be
the space of boundary values at $x = 0$ of decaying solutions to the
homogeneous problem
$$U_x -
G(x,\zeta) U = 0.$$

As shown, for example, in \cite[Theorem 3.7]{GMWZ6}, the space
$\cE_-(\hat \zeta,\rho)$ has a continuous extension to a small
neighborhood of $\hat \gamma = 0$, $\rho \ge 0$. Then the Evans
function for \eqref{doubledeq} is defined as the $2N\times 2N$
determinant:
\begin{equation}\label{Evans-doubled}\DD(\hat \zeta,\rho) = \det (\ker \Gamma,
\cE_-){|_{x=0}}.\end{equation}

Meanwhile, the Evans function $D_L$ for the problem \eqref{1steqsW}
is defined as
\begin{equation}\label{Evans1}D_L(\hat \zeta,\rho) = \det
(\CalU_1^R,\dots,\CalU_{k}^R,\CalU_{k+1}^L,\dots,\CalU_{N}^L){|_{x=0}}\end{equation}
which is analytic for $\R \lambda >0$ and can be continuously
extended to a small neighborhood of $\R\lambda =0$ (see, e.g., Lemma
5.24, \cite{Z3}). Now let $\phi_j$, $j=1,\dots,l$ be the derivative
of the profile $\bU^\delta$ with respect to $\delta_j$, where $l$ is
the dimension of the smooth manifold $\{\bU^\delta(\cdot)\}$ defined
as in (H3). Thanks to the Evans condition (D), without loss of
generality, we can assume that
\begin{equation} \label{normlization}\CalU_j^R (x,\hat \zeta,0)= \CalU^L_{N-j+1}(x,\hat \zeta,0) =
(\phi_j(x),0),\end{equation} for $j = 1,\dots,l$.

Let $e_j\in \CC^{N}$ be the unit vectors $$e_j =
\frac{(\phi_j(0),0)}{|\phi_j(0)|}, \quad j=1,\dots, l,$$ and extend
to an orthonormal basis $e_1,\dots,e_N$ of $\CC^N$. Then the Evans
function \eqref{Evans-doubled} for the doubled boundary value
problem can be explicitly defined as
\begin{equation}\label{Evans-doubled2}\DD(\hat \zeta,\rho) = \det \begin{pmatrix}
e_1 & \dots &e_N & \CalU_1^R &\dots &\CalU_k^R &0& \dots &0\\
e_1 & \dots &e_N & 0& \dots &0 &\CalU_{k+1}^L &\dots &\CalU_N^L
\end{pmatrix}{|_{x=0}}.\end{equation}

We also set \begin{equation}\label{Ephi} \cE_{-,\phi}(\hat
\zeta,\rho) = \mbox{span}\left\{
\begin{pmatrix} \CalU_{1}^R\\\CalU_{N}^L\end{pmatrix},\dots ,\begin{pmatrix}
\CalU_{l}^R\\\CalU_{N-l+1}^L\end{pmatrix} \right\} {|_{(0,\hat
\zeta,\rho)}}.
\end{equation}
For $\epsilon>0$ fixed, denote by $\cE^c_{-,\phi,\epsilon}(\hat
\zeta,\rho)$ any complementary subspace in $\cE_{-}(\hat
\zeta,\rho)$ varying continuously with $(\hat \zeta,\rho)$ such that
\begin{equation}\label{ap-unibound} \cE_{-}(\hat
\zeta,\rho) = \cE_{-,\phi}(\hat \zeta,\rho) \oplus
\cE^c_{-,\phi,\epsilon}(\hat \zeta,\rho)\end{equation} with
uniformly bounded projections for $0\le \rho\le\epsilon$.

Then, we recall the following proposition that was proved for the
Lax shock case in \cite{GMWZ1}, Proposition 7.1.

\begin{proposition}\label{GMWZprop}
(1) Let $D_L(\hat \zeta,\rho)$ and $\DD(\hat\zeta,\rho)$ be the
Evans functions defined as above. Then
\begin{equation} D_L(\hat \zeta,\rho) = (-1)^N \DD(\hat
\zeta,\rho).\end{equation}

(2) Under the Evans assumption (D), we have the following.

(a) For any choice of $0<\delta<R$ there is a constant $C_\delta,R$
such that when $\delta \le \rho\le R$, \begin{equation}|\Gamma u
|\ge C_{\delta,R}|u| \quad \mbox{ for  } u \in \cE_-(\hat
\zeta,\rho). \end{equation}

(b) There exist positive constants $C_1, C_2,\delta$ such that
\begin{equation}\label{Abound1} C_1\rho|u|\le |\Gamma u|\le C_2\rho |u|\qquad \mbox{ for  }
u \in \cE_{-,\phi}(\hat \zeta,\rho)\end{equation} for $0\le \rho\le
\delta$.

(c) There exists $C>0$ such that
\begin{equation}\label{ap-2c} |\Gamma u|\ge C|u|\qquad \mbox{ for  }
u \in \cE^c_{-,\phi,\epsilon}(\hat \zeta,\rho)\end{equation} for
$0\le \rho\le \epsilon$.

(d) For any choice of $R >0$ there is a constant $C_R$ such that for
$0\le \rho\le R$, \begin{equation}\label{ap-2d}|\Gamma u|\ge C_R\rho
|u|\qquad \mbox{ for  }u  \in \cE_{-}(\hat
\zeta,\rho).\end{equation}
\end{proposition}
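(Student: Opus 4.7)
\medskip

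The plan is to follow the strategy of Proposition 7.1 in \cite{GMWZ1}, with only cosmetic modifications to accommodate $l\ne 1$, so that the same argument covers Lax, under-, and overcompressive cases simultaneously.

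For part (1), I would work directly with the explicit determinantal formula \eqref{Evans-doubled2}. Performing the block-row operation ``bottom block minus top block'' kills the $e_j$-columns in the lower half and leaves a block-lower-triangular matrix whose two diagonal blocks are $(e_1,\dots,e_N)$ (an orthonormal frame, whose determinant is $\pm 1$) and $(-\CalU_1^R,\dots,-\CalU_k^R,\CalU_{k+1}^L,\dots,\CalU_N^L)|_{x=0}$. Pulling $k$ minus signs out of the second block and comparing to \eqref{Evans1}, the sign bookkeeping collapses to the universal factor $(-1)^N$. This algebraic step is dimension-free and does not depend on the shock type.

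For part (2), the inputs are the non-vanishing of $\DD$ encoded by (D) (via (1)), the normalization \eqref{normlization} for $j=1,\dots,l$, and the order-$l$ vanishing of $D_L$ at $\rho=0$ from (D). For (a) I would observe that $\Gamma$ is continuous, $\cE_-$ varies continuously (MZ conjugation/\cite{GMWZ6}), and $\Gamma u=0$ on $\cE_-$ is equivalent to $\DD=0$, so compactness of $\{\delta\le\rho\le R\}$ together with (D) yields a uniform lower bound. For (b), the upper bound is immediate from \eqref{normlization}: each generator of $\cE_{-,\phi}(\hat\zeta,0)$ has equal top and bottom components $(\phi_j(0),0)$, so $\Gamma$ vanishes at $\rho=0$ on $\cE_{-,\phi}$, and smoothness gives $|\Gamma u|\le C_2\rho|u|$. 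The matching lower bound comes from showing that on $\cE_{-,\phi}$ the map $\rho^{-1}\Gamma$ extends continuously to $\rho=0$ as an \emph{injection}; this injectivity is precisely what (D) asserts: if it failed, $\DD$ would vanish to order $>l$ at some $\hat\zeta$, contradicting the assumed precise order $l$. For (c), the same non-degeneracy statement, applied on the fixed transversal $\cE^c_{-,\phi,\epsilon}$, gives $|\Gamma u|\ge C|u|$ uniformly for $0\le\rho\le\epsilon$. Finally (d) follows by decomposing $u=u_\phi+u^c$ via the direct sum \eqref{ap-unibound}, applying (b) and (c), and using that the projections are uniformly bounded, which permits the elementary combination
\begin{equation*}
|\Gamma u|\ge |\Gamma u^c|-|\Gamma u_\phi|\ge C|u^c|-C_2\rho|u_\phi|,
\end{equation*}
together with a second application of (a) on $\delta\le\rho\le R$ to patch to the large-$\rho$ regime, producing a single bound $|\Gamma u|\ge C_R\rho|u|$ for $0\le\rho\le R$.

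The main obstacle is item (b), specifically the lower bound $|\Gamma u|\ge C_1\rho|u|$ on $\cE_{-,\phi}$. In the Lax case ($l=1$) treated in \cite{GMWZ1}, this reduces to a one-dimensional first-derivative computation. For general $l$ the natural replacement is to show that the $l\times l$ matrix obtained by differentiating the generators $\bigl(\CalU_j^R,\CalU_{N-j+1}^L\bigr)|_{x=0}$ in $\rho$ and projecting onto $\Gamma$-image is invertible at $\rho=0$; this is a linear-algebraic translation of the statement in (D) that the zero of $D_L$ at $\rho=0$ has order \emph{exactly} $l$ and not higher, combined with (H3) which furnishes the $l$ independent directions $\phi_1,\dots,\phi_l$ used in the normalization \eqref{normlization}. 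Once this invertibility is recorded, the rest of the argument is identical to the Lax case and no further modification is needed to treat over- or under-compressive shocks.
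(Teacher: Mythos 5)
Your treatment of parts (1), (2a), (2b), and (2c) is aligned with the paper's proof: part (1) is the same determinantal row operation (the paper only says "performing the row matrix operation, (1) is clear"); (2a) is identical (continuity, compactness, and (D)); (2b) matches — the paper writes out the $\rho$-expansion of the generators $\bigl(\CalU_j^R,\CalU_{N-j+1}^L\bigr)|_{x=0}$ and uses linearity of the determinant in the last $l$ columns plus (D) to conclude nondegeneracy of the linear terms, which is exactly your "invertible $l\times l$ matrix" reformulation; and your sketch of (2c) captures the paper's contradiction argument in spirit, though the paper spells out the column-replacement computation (passing from $\CalM_1$ to $\CalM_2$ and comparing orders of vanishing) in full.

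However, your derivation of (2d) has a genuine gap. You propose to combine (b) and (c) via the triangle inequality: $|\Gamma u|\ge|\Gamma u^c|-|\Gamma u_\phi|\ge C|u^c|-C_2\rho|u_\phi|$. This fails precisely in the regime where $|u^c|$ is comparable to $\rho|u_\phi|$. In that case the right-hand side may be negative (hence useless), and the reverse splitting $|\Gamma u|\ge|\Gamma u_\phi|-|\Gamma u^c|\ge C_1\rho|u_\phi|-\|\Gamma\|\,|u^c|$ is also useless unless $|u^c|$ is much smaller than $\rho|u_\phi|$. There is no threshold $T$ between these two regimes that makes both cases work without an a priori relation like $C_2\|\Gamma\|\le C_1 C$, which does not hold in general — so the two lower bounds can cancel each other when $\Gamma u_\phi$ and $\Gamma u^c$ are nearly antiparallel. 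The paper's proof of (2d) avoids this altogether: it chooses the minimizer $u^*$ of $|\Gamma u|$ on the unit sphere of $\cE_-$, expands it in the basis $\{v_j\}$ of the last columns of $\CalM_1$, distinguishes cases according to whether the dominant coefficient sits in the first $N-l$ slots or the last $l$, replaces a column of $\CalM_1$ by $u^*$ (and then by $\bigl(\Gamma u^*, 0\bigr)$), and reads off $|\Gamma u^*|\gtrsim 1$ or $|\Gamma u^*|\gtrsim\rho$ directly from the assumed order-$l$ vanishing of $\det\CalM$. This determinant bookkeeping is the structural input that replaces the triangle inequality, and it cannot be bypassed by a simple decomposition argument.
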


\begin{proof} We follow word by word the proof for the Lax shock case in \cite{GMWZ1},
Proposition 7.1. First, by performing the row matrix operation, (1)
is clear. (2a) follows by continuity and compactness, and the fact
that $\Gamma u$ is nonvanishing for nonzero $u\in \cE_-(\hat
\zeta,\rho)$ when $\rho>0$ by Evans function assumption (D).

For the proof of (2b), let us denote the matrix in
\eqref{Evans-doubled2} by $\CalM$ and perform column operations to
replace the 
last $l$ columns of $\CalM$ by $\begin{pmatrix}
\CalU_{j}^R\\\CalU_{N-j+1}^L\end{pmatrix}$, and call the resulting
matrix $\CalM_1$. Now thanks to the normalization
\eqref{normlization} and the fact that fast modes depend
analytically on $\rho$, we have for $j = 1,\dots,l$
\begin{equation}
\begin{pmatrix} \CalU_{j}^R\\\CalU_{N-j+1}^L\end{pmatrix}(0,\hat \zeta, \rho)
= \begin{pmatrix}(\phi_j(0),0)\\(\phi_j(0),0)\end{pmatrix} +
\begin{pmatrix}c_{1j}(\hat \zeta)\\c_{2j}(\hat \zeta)\end{pmatrix}
\rho + \cO(\rho^2).
\end{equation}

Thus, the definition of $e_j$, linearity of the determinant in the
last 
$l$ columns, and the Evans condition (D) show that $c_{1j} -c_{2j}$
are nonzero for all $j$. This together with the definition of
$\Gamma U$
$$\Gamma \begin{pmatrix}U^R\\U^L\end{pmatrix} = U^R - U^L$$ 
yields
\eqref{Abound1} at once.


(2c) Let $v_1(\hat \zeta,\rho),\cdots,v_{N}(\hat \zeta,\rho)$ be the
last $2n$ columns of the matrix $\CalM_1$ defined above. These
vectors form a basis for $\cE_-(\hat \zeta,\rho)$. Take an arbitrary
vector $w\in \cE_{-,\phi,\epsilon}^c(\hat \zeta,\rho)$. Then
\begin{equation}\label{ap-w}w = \sum_{j=1}^{N}c_{j,\epsilon}(\hat \zeta,\rho)v_j(\hat
\zeta,\rho),\end{equation} where $c_{j,\epsilon}(\hat \zeta,\rho)$
depend continuously on $(\hat \zeta,\rho)$.

Set $c^{'}_\epsilon = (c_{1,\epsilon},\cdots,c_{N-l,\epsilon})$ and
$c^{''}_\epsilon = (c_{N-l+1,\epsilon},\cdots,c_{N,\epsilon})$. The
condition that the projections in \eqref{ap-unibound} are uniformly
bounded implies that there is an $\epsilon_0>0$ such that
\begin{equation}\label{ap-wb} |c{'}_\epsilon (\hat \zeta,\rho)|\ge \epsilon_0
|c^{''}_\epsilon(\hat \zeta,\rho)|,\end{equation} for $0\le \rho\le
\epsilon$.

In view of (D), we just need to show that $\Gamma w$ is nonvanishing
at $\rho=0$ for $w$ as in \eqref{ap-w} and \eqref{ap-wb} with
$|(c^{'}_\epsilon,c^{''}_\epsilon)|=1$, since \eqref{ap-2c} then
follows by continuity and compactness. Suppose $\Gamma w=0$ at
$(\hat \zeta,0)$ for some such $w$. Because of \eqref{ap-wb} some
$c_{j,\epsilon}$ with $j\le N-l$, say, $c_{1,\epsilon}$ satisfies
\begin{equation}|c_{1,\epsilon}|\ge c_0\end{equation}for $\rho $
near $0$, and for some $c_0>0$. Since $\Gamma w=0$ at $\rho =0$ and
$w(\hat \zeta,\rho)$ is continuous, we have
\begin{equation}\label{ap-w1} w(\hat \zeta,\rho) = \begin{pmatrix}a(\hat
\zeta) \\a(\hat \zeta)\end{pmatrix} + \cO(\rho).\end{equation} Write
$v_j = (v_{j+},v_{j-})$, use column operations to replace $v_1$ in
$\CalM_1$ by $w$, and call the resulting matrix $\CalM_2$. Then $
\CalM_2 =$
$$\begin{pmatrix}
e_1 & \dots &e_N & a(\hat \zeta)+\cO(\rho)&v_{2+} &\dots &v_{N-l,+} &(\phi_1(0),0)+\cO(\rho)& \dots &(\phi_l(0),0)+\cO(\rho)\\
e_1 & \dots &e_N & a(\hat \zeta)+\cO(\rho)& v_{2-} &\dots &v_{N-l,-}
&(\phi_1(0),0)+\cO(\rho)& \dots &(\phi_l(0),0)+\cO(\rho)
\end{pmatrix} .$$
\eqref{ap-w1} implies that $|\det \CalM_2(\hat \zeta,\rho)|\ge C
|\det \CalM_1(\hat \zeta,\rho)|$ for some $C>0$ uniformly near
$(\hat \zeta,0)$. But $$ \det \CalM_2(\hat \zeta,\rho) = \cO(\rho)^l
\cO(\rho) \qquad \mbox{as}\quad \rho \to 0.$$ This contradicts the
assumed vanishing of $\det \CalM = \det \CalM_1$ to exactly $l^{th}$
order at $\rho =0$.

(2d) For any fixed $(\hat \zeta,\rho)$, let $u^* =
\begin{pmatrix}u_+(\hat \zeta,\rho)\\u_-(\hat
\zeta,\rho)\end{pmatrix}  \in \cE_-(\hat \zeta,\rho)$ be an element
where the minimum $$\min_{|u|=1,u\in \cE_-(\hat \zeta,\rho)}|\Gamma
u|$$ is attained. Write $u^* =\sum_{j=1}^{N}c^*_{j,\epsilon}(\hat
\zeta,\rho)v_j(\hat \zeta,\rho)$ and define
$c^{*'}_{j,\epsilon},c^{*''}_{j,\epsilon}$ in the same way as above.
Then, again, the uniform boundedness of the projections in
\eqref{ap-unibound} implies that there is an $\epsilon_0>0$ such
that either
\begin{equation}\label{ap-ux1} |c{*'}_\epsilon (\hat \zeta,\rho)|\ge \epsilon_0
|c^{*''}_\epsilon(\hat \zeta,\rho)|\end{equation} or
\begin{equation}\label{ap-ux2} |c{*''}_\epsilon (\hat \zeta,\rho)|\ge
\epsilon_0 |c^{*'}_\epsilon(\hat \zeta,\rho)|\end{equation} for
$0\le \rho\le \epsilon$. Correspondingly, these imply that, without
loss of generality, there holds either
\begin{equation}\label{ap-ux3}|c^*_{1,\epsilon}|\ge
c_0\end{equation} or
\begin{equation}\label{ap-ux4}|c^*_{N,\epsilon}|\ge
c_0\end{equation} for $\rho $ near $0$, and for some $c_0>0$.

In the case that \eqref{ap-ux3} holds, as above, we perform column
operations to replace $v_1$ in $\CalM_1$ by $u^*$, and call the
result $\CalM_3$. Then $ \CalM_3 =$
$$\begin{pmatrix}
e_1 & \dots &e_N & u_+&v_{2+} &\dots &v_{N-l,+} &(\phi_1(0),0)+\cO(\rho)& \dots &(\phi_l(0),0)+\cO(\rho)\\
e_1 & \dots &e_N & u_-& v_{2-} &\dots &v_{N-l,-}
&(\phi_1(0),0)+\cO(\rho)& \dots &(\phi_l(0),0)+\cO(\rho)
\end{pmatrix} .$$
Next perform column operations to replace the column $u^* =
\begin{pmatrix}u_+\\u_-\end{pmatrix}$ by $$\begin{pmatrix} u_+ -
u_-\\0\end{pmatrix} = \begin{pmatrix} \Gamma u^*\\0\end{pmatrix}.$$
Thus, by direct calculations and \eqref{ap-ux3}, $$|\det \CalM_3 |=
|\Gamma u^*|\cO(\rho)^l \ge C|\det\CalM_1| = C|\det \CalM| \ge
C\rho^l,$$ which gives $|\Gamma u^*|\ge C$.

Similarly, in the case that \eqref{ap-ux3} holds, replacing $v_N$ in
$\CalM_1$ by $u^*$, denoting the resulting matrix by $\CalM_4$, and
performing column operations as above, we then obtain
$$|\det\CalM_4| = |u_+(\hat \zeta) - u_-(\hat \zeta,\rho)|
\cO(\rho)^{l-1} = |\Gamma u^*|\cO(\rho)^{l-1}.$$ This together with
$|\det\CalM_4| \ge C |\det\CalM_1| =\cO(\rho^l)$ by \eqref{ap-ux4}
yields $|\Gamma u^*|\ge C\rho $.

Thus, altogether we obtain $|\Gamma u|\ge C\rho|u|$, for $u\in
\cE_-(\hat \zeta,\rho)$ with $|u|=1$, uniformly in $\rho$ near $0$.
Together with (2a), this proves (2d).
\end{proof}

Now let $\cT$ be the MZ conjugation such that \eqref{doubledeq}
leads to the following constant--coefficient system
\begin{equation}\label{doubledeq-MZ}
\left\{\begin{array}{lcr} &U_x - G(\infty,\zeta) U = F,\\&\Gamma_1
U= 0 \quad \mbox{on }x=0,\end{array}\right.\end{equation} where
$\Gamma_1 = \Gamma \cT$ and $G$ has the block form as in
\eqref{eq-bl},\eqref{para-bl}:
\begin{equation} \label{G-bl}G(\infty,\zeta) =
\begin{pmatrix}P_+(\zeta)&0&0\\0&P_-(\zeta)&0\\0&0&H(\hat\zeta,\rho)\end{pmatrix}\end{equation}

Thus, we can decompose $U\in \CC^{2N}$ as follows
\begin{equation}\label{Udecompose}
U= U_{P_+} + U_{P_-} + U_{H_+} + U_{H_-},
\end{equation} and set $$U_- = U_{P_-} + U_{H_-} \in \cE_-(\hat
\zeta,\rho).$$

Define the $l$-dimensional subspace $E_{P_{1-}}$ of $E_{P-}$ by
$$\cE_{-,\phi} = \cT E_{P_{1-}},$$ where $\cE_{-,\phi}$ is defined
as in \eqref{Ephi}, and for $\epsilon>0$ fixed, chose a smoothly
varying complementary subspace $E_{P_{2-,\epsilon}}$ such that
\begin{equation}\begin{aligned} &E_{P_-} = E_{P_{1-}} (\hat
\zeta,\rho) \oplus E_{P_{2-,\epsilon}}(\hat \zeta,\rho),\\&U_{P_-} =
U_{P_{1-}} + U_{P_{2-,\epsilon}}\end{aligned}\end{equation} with
uniformly bounded projections for $0\le \rho\le\epsilon$. Take
\begin{equation} \cE^c_{-,\phi,\epsilon} = \cT (E_{P_{2-,\epsilon}} \oplus
E_{H_{-}}).\end{equation} $\cE^c_{-,\phi,\epsilon}$ is then a choice
that works in \eqref{ap-unibound}.

Then the following is an immediate consequence of Proposition
\ref{GMWZprop}.

\begin{corollary}\label{auxi-coro} There exist positive constants
$C_1,C_2$ and $\delta_0$ such that for $0\le \rho \le \delta_0$
\begin{equation}\label{auxiG1} \begin{aligned} (\mbox{a}) \qquad& C_1\rho
|U_{P_{1-}}|\le |\Gamma_1 U_{P_{1-}}| \le C_2 \rho |U_{P_{1-}}|,\\
(\mbox{b}) \qquad & |\Gamma_1( U_{H_{-}} + U_{P_{2-,\epsilon}})| \ge C_1 ( |U_{H_{-}} |+| U_{P_{2-,\epsilon}}|),\\
(\mbox{c}) \qquad & |\Gamma_1 U_{-}| \ge C_1 \rho
|U_{-}|,\end{aligned}\end{equation} where $\Gamma_1$ is defined as
in \eqref{doubledeq-MZ}. These estimates hold uniformly near the
basepoint $X_0=(\underline{\hat\zeta},0)$. \end{corollary}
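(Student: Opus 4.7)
The plan is to derive each of the three estimates (a)--(c) as a direct translation of the corresponding parts of Proposition \ref{GMWZprop} under the MZ conjugation $\cT$. Since $\cT$ is a smooth invertible matrix with $\cT$ and $\cT^{-1}$ uniformly bounded on a neighborhood of the basepoint $X_0$, the Euclidean norm of any vector $U$ is comparable to the norm of $\cT U$, so $L^2$-type estimates transfer freely across the conjugation. The boundary operator for the conjugated problem satisfies $\Gamma_1 = \Gamma \cT$ by definition, and the subspaces have been arranged precisely so that $\cT$ maps $E_{P_{1-}}$ onto $\cE_{-,\phi}$ and $E_{P_{2-,\epsilon}}\oplus E_{H_-}$ onto $\cE^c_{-,\phi,\epsilon}$, which are the two subspaces figuring in Proposition \ref{GMWZprop}(2b)--(2c).

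For (a), I would take an arbitrary $U_{P_{1-}}\in E_{P_{1-}}$ and set $u:=\cT U_{P_{1-}}\in \cE_{-,\phi}(\hat\zeta,\rho)$. Then $|\Gamma_1 U_{P_{1-}}|=|\Gamma u|$, and the uniform boundedness of $\cT^{\pm 1}$ yields $|u|\sim |U_{P_{1-}}|$; plugging these into \eqref{Abound1} gives the two-sided bound $C_1\rho|U_{P_{1-}}|\le |\Gamma_1 U_{P_{1-}}|\le C_2\rho|U_{P_{1-}}|$ for $0\le\rho\le\delta_0$ with $\delta_0$ the $\delta$ of Proposition \ref{GMWZprop}(2b). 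For (b), applying the same recipe with $u:=\cT(U_{H_-}+U_{P_{2-,\epsilon}})\in \cE^c_{-,\phi,\epsilon}$ and invoking \eqref{ap-2c} yields $|\Gamma_1(U_{H_-}+U_{P_{2-,\epsilon}})|\ge C|u|\gtrsim |U_{H_-}|+|U_{P_{2-,\epsilon}}|$, where the last step uses the uniform boundedness of the projections implicit in the splitting $E_{P_-}=E_{P_{1-}}\oplus E_{P_{2-,\epsilon}}$ together with the fact that $E_{H_-}\oplus E_{P_{2-,\epsilon}}$ is a direct sum mapped isomorphically (with uniform constants) to $\cE^c_{-,\phi,\epsilon}$. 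Finally, (c) is just \eqref{ap-2d} with $R=\delta_0$ applied to $u:=\cT U_-\in \cE_-(\hat\zeta,\rho)$.

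The only step that requires any care is verifying that the ``uniform boundedness'' built into the choice of $E_{P_{2-,\epsilon}}$ and $\cE^c_{-,\phi,\epsilon}$ is enough to collapse the estimate on $|u|$ in Proposition \ref{GMWZprop} into an estimate on the sum $|U_{H_-}|+|U_{P_{2-,\epsilon}}|$ in (b); this is automatic from the triangle inequality applied in both directions once one notes that the projections onto the $E_{H_-}$ and $E_{P_{2-,\epsilon}}$ factors are themselves uniformly bounded on the range $0\le\rho\le\epsilon$. Beyond this routine bookkeeping there is no obstacle: all spectral content, in particular the exact order-$\rho$ degeneracy in the $\phi$-direction and the nondegeneracy of $\Gamma$ on the complementary piece, has already been extracted in Proposition \ref{GMWZprop} from the Evans condition (D). The corollary therefore reduces to pulling the three bounds (2b)--(2d) back through the conjugation, uniformly on a small neighborhood of the basepoint $X_0=(\underline{\hat\zeta},0)$.
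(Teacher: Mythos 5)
Your proposal is correct and matches the paper's intent: the paper itself states that the corollary is "an immediate consequence of Proposition \ref{GMWZprop}," and your argument — pulling bounds (2b), (2c), (2d) back through the uniformly bounded, uniformly invertible MZ conjugation $\cT$, using $\Gamma_1 = \Gamma\cT$ and the identifications $\cE_{-,\phi} = \cT E_{P_{1-}}$, $\cE^c_{-,\phi,\epsilon} = \cT(E_{P_{2-,\epsilon}}\oplus E_{H_-})$ — is exactly the routine bookkeeping the paper leaves implicit.
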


Thus, we obtain the following lemma which is essential for the
construction of degenerate symmetrizers.

\begin{lemma}[Lemma 7.1, \cite{GMWZ1}] There exists a constant
$\delta>0$ such that for $\rho$ sufficiently small we have
\begin{equation}\label{bd-est}|\Gamma U_-| \ge \delta (|U_{H_-}| + \rho
|U_{P_-}|)\end{equation} uniformly in a neighborhood of the base
point $X_0=(\underline{\hat\zeta},0)$. 
\end{lemma}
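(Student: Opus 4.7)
The plan is to combine the three bounds from Corollary~\ref{auxi-coro} via a simple two-case split. Since $\Gamma_1 = \Gamma \cT$ and $\cT$ is bounded with bounded inverse, it suffices to prove the equivalent statement $|\Gamma_1 U_-| \gtrsim |U_{H_-}| + \rho|U_{P_-}|$ in the conjugated coordinates introduced after \eqref{doubledeq-MZ}. First I would set $V_1 := U_{P_{1-}}$ and $V_2 := U_{H_-} + U_{P_{2-,\epsilon}}$, so that $U_- = V_1 + V_2$. By the uniformly bounded projections associated with the splittings $\cE_- = \cE_{-,\phi} \oplus \cE_{-,\phi,\epsilon}^c$ and $E_{P_-} = E_{P_{1-}} \oplus E_{P_{2-,\epsilon}}$, one has the equivalences $|V_1| \asymp |U_{P_{1-}}|$ and $|V_2| \asymp |U_{H_-}| + |U_{P_{2-,\epsilon}}|$, together with $|U_-|\asymp |V_1|+|V_2|$.

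Next I would split on the ratio $|V_2|/(\rho|V_1|)$. In the regime $|V_2| \ge K\rho|V_1|$ with $K$ a large constant to be fixed, linearity of $\Gamma_1$ and the triangle inequality combined with (a) and (b) of Corollary~\ref{auxi-coro} yield
$$|\Gamma_1 U_-| \ge |\Gamma_1 V_2| - |\Gamma_1 V_1| \ge C_1|V_2| - C_2\rho|V_1| \ge (C_1 - C_2/K)|V_2|,$$
so that choosing $K > 2C_2/C_1$ gives $|\Gamma_1 U_-| \ge (C_1/2)|V_2|$; since $\rho|U_{P_-}| \lesssim \rho|V_1| + \rho|V_2| \le |V_2|/K + |V_2|$, the right-hand side dominates $|U_{H_-}| + \rho|U_{P_-}|$ up to a multiplicative constant. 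In the opposite regime $|V_2| < K\rho|V_1|$, I would instead invoke (c) to get $|\Gamma_1 U_-| \ge C_1 \rho|U_-| \gtrsim \rho|V_1|$, and then bound $|U_{H_-}| + \rho|U_{P_-}| \lesssim |V_2| + \rho|V_1| + \rho|V_2| \le (2K+1)\rho|V_1|$, which again closes the estimate.

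I do not anticipate a serious analytical obstacle here: the real content—the Evans-type non-degeneracy of the boundary operator on $\cE_{-,\phi,\epsilon}^c$, its sharp $O(\rho)$ degeneracy on $\cE_{-,\phi}$, and the uniform $O(\rho)$ lower bound on all of $\cE_-$—is already packaged in Corollary~\ref{auxi-coro} and ultimately rests on the Evans stability hypothesis (D). The only delicate point is the bookkeeping of constants in the case split, and the verification that the direct-sum projections onto $E_{P_{1-}}$, $E_{P_{2-,\epsilon}}$, and $E_{H_-}$ are uniformly bounded in a neighborhood of the basepoint $X_0$; but this uniform boundedness was built into the construction of the complementary subspace $\cE_{-,\phi,\epsilon}^c$ and of the block decomposition immediately preceding the corollary.
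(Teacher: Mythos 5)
Your proposal is correct and rests on exactly the same ingredients as the paper's proof: all three estimates (a), (b), (c) of Corollary~\ref{auxi-coro}, the reduction from $\Gamma$ to $\Gamma_1$ via the bounded conjugator $\cT$, and the uniform boundedness of the projections onto $E_{P_{1-}}$, $E_{P_{2-,\epsilon}}$, and $E_{H_-}$. The only difference is cosmetic bookkeeping in how the estimates are combined to absorb the cross term $C\rho|U_{P_{1-}}|$: the paper adds a sufficiently small multiple of the triangle-inequality estimate obtained from (a) and (b) to the uniform lower bound (c), whereas you instead split into the two regimes $|V_2|\gtrless K\rho|V_1|$ and use (a)+(b) in the first regime and (c) in the second — these are two standard, essentially interchangeable ways of closing the same estimate, and your version is correct as written.
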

\begin{proof} In view of \eqref{auxiG1} (a), (b), we have
$$\begin{aligned} |\Gamma_1U_-| &= |\Gamma_1 U_{H_-} + \Gamma _1 U_{P_{1-}}+ \Gamma _1
U_{P_{2-,\epsilon}}|\\&\ge C(|U_{H_-}| + |U_{P_{2-,\epsilon}}| ) -
C\rho |U_{P_{1-}}|.
\end{aligned}$$
Adding a sufficiently small multiple of this inequality to the
inequality \eqref{auxiG1} (c) $$|\Gamma_1 U_-| \ge C\rho |U_-| =
C\rho (|U_{H_-}| + |U_{P_{1-}}| +|U_{P_{2-,\epsilon}}| ),$$ we
obtain for $\rho$ small $$|\Gamma_1 U_-| \ge \delta(|U_-| +
\rho|U_{P_{1-}}| + |U_{P_{2-,\epsilon}}|),$$ which implies
\eqref{bd-est}.\end{proof}

\section{Auxiliary problem}\label{app-auxi} 
In this section we consider the $n\times n$ system on the whole real
line $\RR$
\begin{equation}\label{auxi-eq} L_0V :=
(B^{11}V_{x})_{x}-(A^1V)_{x} = f_x\end{equation} where $A^1,B^{11}$
are same as in \eqref{eg-eqs}. Let us recall $B^{11} =
\begin{pmatrix}0&0\\b_1^{11}&b_2^{11}\end{pmatrix}$. We shall derive an estimate slightly similar to \eqref{Vbound} by
Kreiss-type symmetrizers techniques in the case of Lax and
overcompressive shocks. 
This will be done by modifying
the proof in \cite[Section 10.2]{GMWZ1}; though, our purpose is
slightly different and we have to treat the degeneracy of the
viscosity matrix $B^{11}$ as comparing to the identity matrix in
\cite{GMWZ1}. Specifically, we prove the following:

\begin{lemma}\label{lem-auxi} Let $V = (V_1,V_2)\in \CC^{n-r}\times \CC^r$ be a solution of \eqref{auxi-eq}. We prove that there exists a constant
$C>0$ such that
\begin{equation}\label{auxi-Vbound}\begin{aligned}&|V|_{L^p} \le C
(|f|_{L^1}+|f|_{L^\infty})\\&|V_x|_{L^p} \le C
(|f|_{L^1}+|f|_{L^\infty} +
|f_x|_{L^p})\end{aligned}\end{equation}for any $1\le p\le \infty$.
\end{lemma}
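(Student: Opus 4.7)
The strategy is to reduce \eqref{auxi-eq} to a first-order ODE system by integrating once in $x$, and then to apply a tailored energy estimate via a degenerate Kreiss-type symmetrizer adapted to the Lax or overcompressive shock setting. The argument closely parallels \cite[Section 10.2]{GMWZ1}, where the case of identity viscosity was treated; here the key modification is to accommodate the degenerate hyperbolic--parabolic block structure of $B^{11}$ and the nontrivial $l$-dimensional kernel $\mathrm{span}\{\phi_j\}$ of $L_0$.

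\textbf{Step 1 (Reduction to first order).} Since both sides of \eqref{auxi-eq} are total derivatives in $x$, integrating from $-\infty$ to $x$ and using decay gives
\begin{equation*}
B^{11}(x) V_x(x) = A^1(x) V(x) + f(x).
\end{equation*}
Reading off the first block row and using Hypothesis (H1), which makes $A^1_{11}$ uniformly invertible, yields the algebraic constraint
$V_1 = -(A^1_{11})^{-1}(A^1_{12} V_2 + f_1)$,
so in particular $|V_1|_{L^p} \lesssim |V_2|_{L^p} + |f|_{L^p}$. Substituting into the second block row produces a single first-order parabolic ODE
$\mathcal{B}(x) V_{2,x} = \mathcal{A}(x) V_2 + \tilde f$
with $\mathcal{B} := b_2^{11} - b_1^{11}(A^1_{11})^{-1} A^1_{12}$ uniformly positive by (A3), and $\tilde f$ linear in $f$ (one piece of $\tilde f$ contains $f_x$, but in the energy estimate it is moved to test-function side by integration by parts, producing boundary terms bounded by $|f|_{L^\infty}$).

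\textbf{Step 2 (Doubled problem and symmetrizer estimate).} Double the reduced problem onto $\RR_+$ via $W^\pm(x) := V_2(\pm x)$ with transmission condition $W^+(0) = W^-(0)$, so that the constant-coefficient limits at $+\infty$ are those of the original system at $\pm \infty$. The Lax or overcompressive Evans condition (D) forces the right count of incoming/outgoing modes relative to the kernel directions $\phi_j$, and allows one to construct a degenerate symmetrizer $S$ for the doubled system at the base point $(\lambda,\tilde\xi) = (0,0)$ as in \cite[Section 10.2]{GMWZ1}, modulo the modification handling the degenerate $\mathcal{B}$. Testing the reduced equation against $S W$, integrating over $[0,\infty)$, and applying Young's inequality in the low-order terms yields
\begin{equation*}
|V_2|_{L^2}^2 + |V_2(0)|^2 \lesssim |\tilde f|_{L^1}\,|V_2|_{L^\infty} + |f|_{L^\infty}^2,
\end{equation*}
which, combined with the pointwise bound
$|V_2(x)|^2 \lesssim |V_2(0)|^2 + \int_0^{|x|} |\tilde f||V_2|\,dz$
analogous to \eqref{key-est}, closes to give $|V_2|_{L^\infty} + |V_2|_{L^2} \lesssim |f|_{L^1} + |f|_{L^\infty}$. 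Interpolating between $L^2$ and $L^\infty$ delivers the first inequality of \eqref{auxi-Vbound}, while the bound on $V_x$ is read off from $B^{11} V_x = A^1 V + f$ in the $V_2$-component and from differentiating the algebraic constraint for $V_1$ (which is where $|f_x|_{L^p}$ enters).

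\textbf{Main obstacle.} The principal difficulty is the symmetrizer construction in Step 2: the viscosity $B^{11}$ is degenerate, so the reduction to $V_2$ mixes parabolic and hyperbolic modes and one cannot directly copy \cite[Section 10.2]{GMWZ1}. Moreover, at $\rho = 0$ the stable and unstable bundles are not separated but intersect precisely along the $l$-dimensional kernel $\mathrm{span}\{\phi_j\}$, so the symmetrizer $S$ must be constructed to be degenerate exactly along these directions and dissipative transverse to them, using the order of vanishing dictated by (D). Exactly this degeneracy is what produces the $|f|_{L^\infty}$ term on the right-hand side of \eqref{auxi-Vbound} (rather than only $|f|_{L^1}$), since boundary contributions from the kernel directions must be absorbed via $V_\pm \sim -(A^1_\pm)^{-1} f(\pm\infty)$ at infinity.
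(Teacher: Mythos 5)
Your Step~1 and the paper's start identically (integrate once to $B^{11}V_x - A^1V = f$, then double onto $\RR_+$), but after that the two routes diverge, and yours has a gap in the middle.

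\textbf{The $f_x$ problem from eliminating $V_1$.} You solve the first block row $0 = A^1_{11}V_1 + A^1_{12}V_2 + f_1$ for $V_1$ and substitute into the second block row. Because that row contains $b_1^{11}V_{1,x}$, differentiating the algebraic constraint produces $V_{2,x}$ terms (good, they feed into $\mathcal{B}$) \emph{and} an $f_{1,x}$ term. You propose to handle this by moving $f_{1,x}$ onto the test function by parts, but the resulting bulk term $\int (S V_2 \tilde f_1)_x\cdot f_1$ contains $V_{2,x}$, which you either do not control at this stage, or re-express via the ODE and reintroduce $f_{1,x}$ — the argument does not close. The paper sidesteps this entirely: it does \emph{not} eliminate $V_1$. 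Instead it keeps the full $(V_1,V_2)$ system, MZ-conjugates to constant coefficients, and then applies a block-triangular change of variables $Y=\cQ Z$ built from $(b_1^{11},b_2^{11})$. In the $Y$ coordinates the system is $\tilde Y_x = \cA(\infty)\cQ^{-1}Y + \hat F$ with $\tilde Y=(0,Y_2^+,0,Y_2^-)$: the $Y_1$ equations are purely algebraic, the $Y_2$ equations are genuinely first-order ODEs, and the forcing $\hat F$ involves only $f$, never $f_x$. Since (H1)--(H2) make $\cA(\infty)\cQ^{-1}$ have distinct nonzero eigenvalues, one diagonalizes and reads off $L^p$ bounds from the scalar algebraic and scalar transport equations by Young's convolution inequality. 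No symmetrizer is needed.

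\textbf{The kernel/nonuniqueness problem.} With $\lambda=0$, $L_0$ has the $l$-dimensional kernel $\mathrm{span}\{\phi_j\}$, so solutions of \eqref{auxi-eq} with the transmission condition $W^+(0)=W^-(0)$ alone are an $l$-parameter family and the bound \eqref{auxi-Vbound} cannot hold for all of them. Your proposal nods at this (``the symmetrizer must be degenerate along $\phi_j$'') but does not actually select a solution or remove the ambiguity; a symmetrizer degenerate in those directions would control only the transverse component, leaving the kernel component unestimated. Moreover, since we are sitting exactly at $\rho=0$, a GMWZ-style degenerate symmetrizer (whose smallest eigenvalue scales like $\rho^2$) gives nothing here. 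The paper's fix is different and decisive: it augments the boundary operator, $\tilde\Gamma W = (W_1,\dots,W_l,V_+-V_-)$, chosen so that $\CC^{2n}=\ker\tilde\Gamma\oplus\cE_-(0)$ with bounded projections. Any solution of the original doubled problem automatically satisfies the augmented one, and the augmented boundary condition is nondegenerate, so the only boundary contribution $|Y_{2-}(0)|$ is bounded directly by $|Y_+|_{L^\infty}$ via the direct-sum projection. This is what makes the estimate $\rho$-independent and uniquely pins down the solution used in the Kreiss--Kreiss decomposition.

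\textbf{Minor point.} Your claim that $\mathcal{B}=b_2^{11}-b_1^{11}(A^1_{11})^{-1}A^1_{12}$ is ``uniformly positive by (A3)'' is not an immediate consequence of (A3), which constrains $\tilde b^{jk}$ in the symmetric variables, not $b_2^{11}$, $b_1^{11}$, $A^1_{12}$ in the conserved ones; the paper instead only needs invertibility and distinctness of eigenvalues, which follow from (H1)--(H2) and \eqref{ellcond}. In short, the paper's argument replaces the symmetrizer idea with (i) a transformation keeping $f$ (not $f_x$) as the forcing, (ii) an augmented boundary condition to remove the kernel degeneracy, and (iii) explicit diagonalization — none of which appear in your proposal, and the two gaps above are genuine.
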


\begin{proof}
We first integrate the equation \eqref{auxi-eq},
yielding
\begin{equation}\label{auxi-eqi} B^{11}V_{x}-A^1V =
f.\end{equation}

Consider the double $2n\times 2n$ boundary problem on $x\ge 0$
equivalent to \eqref{auxi-eqi}
\begin{equation} \label{auxi-eqbc}\begin{aligned} &\cB W_{x}-\cA W =
F\\& \Gamma W = 0\qquad \mbox{on}\qquad
\{x=0\}\end{aligned}\end{equation} where, defining $\phi_\pm(x) =
\phi(\pm x)$ for any function $\phi$ defined on $\RR$,
\begin{equation}\label{auxi-coeff}\begin{aligned} &W(x) = \begin{pmatrix}V_+(x)
\\V_-(x)\end{pmatrix},\\& \cA(x) = \begin{pmatrix}A^1_+(x)
&0\\0& -A_-^1(x)\end{pmatrix},\\& \cB(x) =
\begin{pmatrix}B^{11}_+(x) &0\\0& -B_-^{11}(x)\end{pmatrix},
\\& F(x) =
\begin{pmatrix}f_+(x)
\\-f_-(x)\end{pmatrix},\\&\Gamma W = V_+ -
V_-.\end{aligned}\end{equation} In what follows, we shall keep track
of variables $W$ as $(W_1^+,W_2^+,W_1^-,W_2^-)\in \CC^{n-r}\times
\CC^{r}\times \CC^{n-r}\times \CC^{r}$ in the obvious way
corresponding to matrix blocks as above. Notice also that $\cB$ is
degenerate in $W_1^\pm$-blocks.

Let $\cE_-(0)$ be the space of boundary values of decaying solutions
of \eqref{auxi-eqbc} when $F =0$. Then, we have $$\dim \cE_-(0) =
i$$ where $i$ is defined at the beginning of Section
\ref{shockprofile}. On the other hand, $\ker \Gamma$ has dimension
$n$. Thus, Assumption (D) then implies that $\ker \Gamma $ and
$\cE_-(0)$ have an $l=i-n$ dimensional intersection spanned by
$$(\phi_1(0),\phi_1(0)),\cdots,(\phi_l(0),\phi_l(0))$$ where
functions $\phi_i(0)$ are defined as in the paragraph just below
\eqref{Evans1}.

We define an augmented boundary condition $\tilde \Gamma$ with
property that \begin{equation}\label{bd-proj}\CC^{2n} = \ker \tilde
\Gamma \oplus \cE_-(0).\end{equation}

Since $\phi_j$ form a basis of the tangential space of the smooth
manifold $\{\bU^\delta(\cdot)\}$ defined as in (H3), without loss of
generality, we assume that the $j^{th}$ component of $\phi_j$ is not
zero. Thus, let us define
\begin{equation} \tilde \Gamma W = (W_1,\cdots,W_l,V_+ -
V_-)\end{equation} where $W_1,\cdots,W_l$ are the first $l$
components of $W\in \CC^{2n}$.

Now we consider the system
\begin{equation} \label{auxi-eqagbc}\begin{aligned} &\cB W_{x}-\cA W =
F\\& \tilde\Gamma W = 0\qquad \mbox{on}\qquad
\{x=0\}.\end{aligned}\end{equation} Since any solution of
\eqref{auxi-eqagbc} is also a solution of \eqref{auxi-eqbc}, we only
need to give an estimate for solutions of \eqref{auxi-eqagbc}. By
using the MZ conjugation \cite{MeZ1}, there is a uniformly bounded
transformation $\cC$ such that by setting $W = \cC Z$,
\eqref{auxi-eqagbc} gives
\begin{equation} \label{auxi-eqagbcZ}\begin{aligned} &\cB(\infty) Z_{x}-\cA(\infty) Z =
\bar F\\& \bar\Gamma Z = 0\qquad \mbox{on}\qquad
\{x=0\}\end{aligned}\end{equation} where $\bar \Gamma = \tilde
\Gamma \cC$. 
Now let us define new variable $Y$ as
$$Y: = \cQ Z, \qquad \mbox{with}\qquad \cQ:=\begin{pmatrix}\begin{pmatrix}I_{n-r}&0\\b_1^{11}&b_2^{11}\end{pmatrix}
&0\\0&
\begin{pmatrix}I_{n-1}&0\\-b_1^{11}&-b_2^{11}\end{pmatrix}\end{pmatrix}.$$

Then $Y$ solves \begin{equation} \label{auxi-eqagbcY}\begin{aligned}
&\tilde Y_{x}-\cA(\infty)\cQ^{-1} Y = \hat F\\&Y =
(Y_1^+,Y_2^+,Y_1^-,Y_2^-)\\&\tilde Y = (0,Y_2^+,0,Y_2^-)\\&
\hat\Gamma Y = 0\qquad \mbox{on}\qquad
\{x=0\}\end{aligned}\end{equation} where $ \hat \Gamma = \tilde
\Gamma \cC\cQ^{-1}$ and $\hat F = \bar F \cQ^{-1}$.

Now by view of \eqref{auxi-coeff}, \eqref{ellcond}, and (H2),
eigenvalues $\mu_j$ of each block of $\cA(\infty)\cQ^{-1}$ are
distinct and nonzero. Thus, by performing a further transformation
if necessary, we could assume that $\cA(\infty)\cQ^{-1}$ is
diagonal. In these diagonalized coordinates, the system
\eqref{auxi-eqagbcY} consists of $2n$ ``uncoupled'' equations:
$$\begin{aligned}
&-\mu_{j\pm} Y_{1\pm} = \hat F_{1\pm}\\
(Y_{2\pm})_x &- \mu_{j\pm} Y_{2\pm} = \hat F_{2\pm}
\end{aligned}$$
where note that $Y_{i\pm}$ are the projections of $Y_i$ on the
growing (resp. decaying) eigenspaces of $\cA$ associated to
eigenvalues $\mu_{j\pm}$. In particular, $\pm \mu_{j\pm} >0$.

From equations for $Y_1$, it is clear that
\begin{equation}\label{auxi-estY1}|Y_{1}|_{L^p}\lesssim |\tilde
F_1|_{L^p}\qquad \forall p\ge 1.\end{equation}

Meanwhile, $Y_{2\pm}$ satisfies $$\begin{aligned} Y_{2+}(x) &=
\int_x^\infty e^{\mu_{j+}(x-y)} \hat F_{2+}(y)~dy,\\
Y_{2-}(x) &= e^{\mu_{j-}x}Y_{2-}(0)+\int_0^x e^{\mu_{j-}(x-y)} \hat
F_{2-}(y)~dy.\end{aligned}$$

Thus, this yields \begin{equation}\label{auxi-estY2}
\begin{aligned}&|Y_{2+}|_{L^p} \lesssim |\hat F_{2+}|_{L^p},\\&
|Y_{2-}|_{L^p} \lesssim |\hat F_{2-}|_{L^p} + |Y_{2-}(0)|,
\end{aligned}\qquad \forall p\ge 1.\end{equation}

Now since $Y_{2-}(0)\in \cE_-(0)$, by view of \eqref{bd-proj} as
bounded projections and the fact that all our transformations and
their inverses are bounded, we must have
$$|Y_{2-}(0)| \lesssim |\hat \Gamma Y_{-}(0)| \lesssim |\hat \Gamma Y(0)| +
|Y_+(0)| \lesssim |Y_+|_{L^\infty}.$$

Altogether,  we obtain
\begin{equation}\label{auxi-estY}\begin{aligned} &|Y|_{L^p}\lesssim
|\tilde F|_{L^1}+ |\tilde F|_{L^\infty}, \qquad \forall p\ge 1
\end{aligned}
\end{equation} which proves the first bound in \eqref{auxi-Vbound}. Estimates for derivatives are then
immediate by differentiating the equations of $Y_{1}$ in
\eqref{auxi-eqagbcY} and by solving equations \eqref{auxi-eqagbcY}
for $Y_{2x}$ in terms of $Y$ and $\hat F$. Thus, we have proved the
lemma as claimed.
\end{proof}

\section{Independence of the pointwise Green bounds}\label{indep-G}
In this section we comment on independence of the pointwise Green
function estimates. The high--frequency estimate \eqref{boundcS2}
can be derived entirely from auxiliary nonlinear energy estimates as
done in \cite{Z4}; see also Proposition 3.6, \cite{NZ2}, for a great
simplification. Whereas, the independency for the low--frequency
estimate \eqref{boundcS1} can be seen by first proving the following
slightly--weaker version of Proposition \ref{prop-resLF},
independent of the pointwise bounds
\eqref{ptbounds},\eqref{ptbounds-der}. A similar version can be done
for Proposition \ref{prop-resLFH5}.

\begin{proposition}[Low-frequency bounds]\label{prop-indepG} Under the hypotheses of Theorem \ref{theo-nonlin}, for $\lambda \in \Gamma^{\tilde \xi}$ and $\rho :=|(\tilde
\xi,\lambda)|$, $\theta_1$ sufficiently small, there holds the
resolvent bound \begin{equation}\label{bound-indepG} |(L_{\tilde
\xi}-\lambda)^{-1}\partial_{x_1}^\beta f|_{L^p(x_1)} \le
C\rho^{-3/2+(1-\alpha)\beta}(|
f|_{L^1(x_1)}+|\partial_{x_1}f|_{L^1(x_1)}),\end{equation} for all
$2\le p\le \infty$, $\beta =0,1$, and $\alpha$ defined as in
\eqref{alpha}.
\end{proposition}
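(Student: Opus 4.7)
The plan is to follow the proof of Proposition~\ref{prop-resLF} step by step, and to replace the single point at which the pointwise Green-kernel bounds \eqref{ptbounds}--\eqref{ptbounds-der} are invoked with the Kreiss-symmetrizer estimate of Lemma~\ref{lem-auxi} from Appendix~\ref{app-auxi}. The key observation is that the $\beta=0$ half of the argument in Section~\ref{sec-L1Lp} uses only the $L^2$ maximal estimate \eqref{max-est}, the block-diagonal system \eqref{1steqsZ}, the inner-product identity giving \eqref{key-est}, and Young's inequality; pointwise information is never used. Thus the $\beta=0$ case of \eqref{bound-indepG} is immediate, with the even stronger bound $|(L_{\tilde\xi}-\lambda)^{-1}f|_{L^p}\lesssim \rho^{-3/2}|f|_{L^1}$.

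For $\beta=1$ in the Lax or overcompressive case, I would redo the Kreiss--Kreiss splitting $U=V+U_1$, but now take $V$ to solve the \emph{unshifted} one-dimensional system
\begin{equation*}
L_0V=\partial_{x_1}f,\qquad x_1\in\RR,
\end{equation*}
to which Lemma~\ref{lem-auxi} applies directly, yielding
\begin{equation*}
|V|_{L^p}+|V_{x_1}|_{L^p}\lesssim |f|_{L^1}+|f|_{L^\infty}+|\partial_{x_1}f|_{L^p}.
\end{equation*}
Combining with the elementary inequality $|f|_{L^\infty}\lesssim |\partial_{x_1}f|_{L^1}$ valid for $f$ decaying at infinity, this gives $|V|_{L^p}+|V_{x_1}|_{L^1}\lesssim |f|_{L^1}+|\partial_{x_1}f|_{L^1}$. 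The residual $U_1=U-V$ then satisfies $(L_{\tilde\xi}-\lambda)U_1=\rho\,\mathcal{O}(|V|+|V_{x_1}|)$ on $\Gamma^{\tilde\xi}$, since $\lambda=\mathcal{O}(\rho)$ there, and the $\beta=0$ bound already established yields
\begin{equation*}
|U_1|_{L^p}\lesssim \rho^{-3/2}\cdot\rho\cdot(|V|_{L^1}+|V_{x_1}|_{L^1})\lesssim \rho^{-1/2}(|f|_{L^1}+|\partial_{x_1}f|_{L^1}),
\end{equation*}
which, combined with the $L^p$ bound on $V$, closes the Lax/overcompressive case with the claimed exponent $-3/2+(1-\alpha)=-1/2$.

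The undercompressive case is the main obstacle. Lemma~\ref{lem-auxi} as stated is restricted to Lax/overcompressive shocks, because its proof uses the dimension match between $\cE_-(0)$ and $\ker\Gamma$ to choose an augmented boundary operator whose constraint is uniformly invertible; in the undercompressive case this invertibility degrades by a factor of $\rho$, in parallel with the appearance of the $\alpha e^{-\theta|y_1|}$ term in \eqref{ptbounds-der}. My plan here is to repeat the argument of Appendix~\ref{app-auxi} using Proposition~\ref{GMWZprop}(2d) as the sole control on the decaying boundary values, carefully tracking the resulting $\rho^{-1}$ loss; the outcome is a modified auxiliary estimate of the form $|V|_{L^p}+|V_{x_1}|_{L^p}\lesssim \rho^{-1}(|f|_{L^1}+|\partial_{x_1}f|_{L^1})$, which, substituted into the Kreiss--Kreiss step above, produces precisely the exponent $-3/2$ demanded by $\alpha=1$ in \eqref{bound-indepG}. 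All other steps (the $\beta=0$ case, the triangle-inequality combination, and interpolation in $p$) are routine adaptations of material already in Section~\ref{sec-L1Lp} and Appendix~\ref{app-auxi}.
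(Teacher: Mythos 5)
Your treatment of the $\beta=0$ case and of the $\beta=1$, Lax/overcompressive case matches the paper's argument exactly: write $U=V+U_1$ with $V$ solving the $\rho$-independent auxiliary problem $L_0V=\partial_{x_1}f$ of Appendix~\ref{app-auxi}, invoke Lemma~\ref{lem-auxi} together with $|f|_{L^\infty}\le|\partial_{x_1}f|_{L^1}$ to get \eqref{used-vbound}, and feed the resulting $O(\rho)$ forcing for $U_1$ into the $\beta=0$ bound. That part is fine.

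Where you go astray is the undercompressive case, and the miss is significant both in correctness and in effort. The paper handles it in one line, which you overlooked: when $\alpha=1$ and $\beta=1$ the claimed exponent is $-3/2+(1-\alpha)\beta=-3/2$, and this is exactly what the $\beta=0$ bound gives when one simply regards $\partial_{x_1}f$ itself as the forcing, namely
$|(L_{\tilde\xi}-\lambda)^{-1}\partial_{x_1}f|_{L^p}\le C\rho^{-3/2}|\partial_{x_1}f|_{L^1}\le C\rho^{-3/2}(|f|_{L^1}+|\partial_{x_1}f|_{L^1})$.
No Kreiss--Kreiss splitting, no auxiliary problem, no tracking of any $\rho^{-1}$ loss is needed; the entire point of allowing the extra $|\partial_{x_1}f|_{L^1}$ on the right-hand side of \eqref{bound-indepG} is precisely so that the undercompressive case becomes trivial.

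Moreover, the alternative you sketch does not appear to be coherent. The auxiliary operator $L_0$ in \eqref{auxi-eq} carries no $\rho$ dependence whatsoever, so there is no $\rho^{-1}$ loss to ``track'' in Appendix~\ref{app-auxi}; Proposition~\ref{GMWZprop}(2d) concerns the full $\rho$-dependent doubled problem and has no role in the unshifted one-dimensional system. What actually fails for undercompressive shocks is more basic: with $i\le n$ and $l=1$, the dimension count $\dim\ker\tilde\Gamma+\dim\cE_-(0)=(n-l)+i<2n$ shows that the augmented boundary-value problem \eqref{auxi-eqagbc} is not uniquely solvable at all, so Lemma~\ref{lem-auxi}'s argument collapses rather than merely degrades by a factor of $\rho$. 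If one wanted a $\rho$-graded boundary estimate, one would have to return to the shifted auxiliary problem $(L_0-\lambda_0)V=\partial_{x_1}f$ with $\lambda_0=\rho$ as in Proposition~\ref{prop-resLF} and redo the symmetrizer estimate there --- but that re-introduces the machinery the appendix was meant to bypass, and is in any case unnecessary given the trivial argument above.
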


\begin{proof} Certainly by Proposition \ref{prop-resLF}, we only need to prove the bound in the case
$\beta=1$. In the case of undercompressive shocks, the bound is
clear by applying \eqref{bound-indepG} with $\beta=0,\alpha=1$ and
$f$ replaced by $\partial_{x_1}f$:
\begin{equation}|(L_{\tilde
\xi}-\lambda)^{-1}\partial_{x_1}f|_{L^p(x_1)} \le
C\rho^{-3/2}|\partial_{x_1}f|_{L^1(x_1)}.\end{equation}

Now for the case of Lax or overcompressive shocks, we use the
Kreiss--Kreiss trick as in the proof of Proposition
\ref{prop-resLF}, that is, write $U = V + U_1$ where $V$ solves the
auxiliary problem \eqref{auxi-eq}. From the estimate
\eqref{auxi-Vbound} of $V$ and the inequality: $|f|_{L^\infty}\le
|\partial_{x_1}f|_{L^1(x_1)}$ (with $f(+\infty)=0$), we have
\begin{equation}\label{used-vbound}\begin{aligned}&|V|_{L^p} +|V_x|_{L^1}\le C
(|f|_{L^1(x_1)}+|\partial_{x_1}f|_{L^1(x_1)}), \quad \forall p\ge
1.\end{aligned}\end{equation} Thus, replacing \eqref{Vbound} by this
inequality and following the proof of Proposition \ref{prop-resLF},
we obtain the bound \eqref{bound-indepG}.
\end{proof}

\begin{proof}[Proof of Theorem \ref{theo-nonlin}, provided \eqref{bound-indepG}.]
The resolvent estimate \eqref{bound-indepG} is only weaker than
\eqref{res-bound} by 
a stronger norm on $f$. We thus can certainly follow the proof in
Section \ref{sec-estS1}, yielding a low--frequency estimate like
\eqref{boundcS1}, but again weaker by a stronger norm on $f$,
namely, $|f|_{L^1(x)}+|\partial_{x_1}f|_{L^1(x)}$. With this
slightly weaker estimate for $\cS_1$, we can follow word by word the
proof of the theorem in Section \ref{sec-stab}, noting that the
higher derivatives of $f$ (then of $U$) can then be estimated by the
energy estimate \eqref{Hs}. Thus, we obtain the theorem without
requiring any further regularity on the structures of the system.
\end{proof}


\end{document}